\providecommand{\U}[1]{\protect\rule{.1in}{.1in}}
\newtheorem{theorem}{Theorem}
\theoremstyle{plain}
\newtheorem{corollary}{Corollary}
\newtheorem{example}{Example}
\newtheorem{lemma}{Lemma}
\newtheorem{proposition}{Proposition}
\newtheorem{remark}{Remark}
\numberwithin{equation}{section}
\begin{document}
\title{A Note On Unital Full Amalgamated Free Products of Quasidiagonal C*-Algebras }
\author{Qihui Li}
\address{Department of Mathematics, East China University of Science and Technology,
Meilong Road 130, 200237 Shanghai, China.}
\email{lqh991978@gmail.com}
\author{Don Hadwin}
\address{Department of Mathematics, University of New Hampshire, Durham, NH 03824, USA\\
URL: http://euclid.unh.edu/\symbol{126}don}
\email{don@unh.edu}
\author{Jiankui Li}
\address{Department of Mathematics, East China University of Science and Technology,
Meilong Road 130, 200237 Shanghai, China.}
\email{s: jiankuili@yahoo.com}
\author{Xiujuan Ma}
\address{Department of Mathematics, Hebei University of Technology, Tianjing, 300130, China}
\email{mxjsusan@hebut.edu.cn}
\author{Junhao Shen}
\address{Department of Mathematics, University of New Hampshire, Durham, NH 03824, USA}
\email{junhao.shen@unh.edu}
\thanks{The research of the first author is partially supported by National Natural
Science Foundation of China (Grant No. 11201146) and the Fundamental Research
Funds for the Central Universities as well as SRF for ROCS, SEM. }
\subjclass[2000]{ 46L09, 46L35}
\keywords{quasidiagonal C*-algebras; Unital full amalgamated free products of
C*-algebras. }

\begin{abstract}
In the paper, we consider the question whether a unital full amalgamated free
product of quasidiagonal C*-algebras is quasidiagonal again. We give a
sufficient condition such that a unital full amalgamated free product of
quasidiagonal C*-algebras with amalgamation over a finite dimensional
C*-algebra is quasidiagonal. Applying this result, we conclude that a unital
full free product of two AF algebras with amalgamation over a
finite-dimensional C*-algebra is AF if there are faithful tracial states on
each of these two AF algebras such that the restrictions on the common
subalgebra agree.

\end{abstract}
\maketitle

\section{Introduction}

Quasidiagonal C*-algebras have now been studied for more than 30 years. A set
$\Omega\subseteq\mathcal{B}\left(  \mathcal{H}\right)  $ is quasidiagonal if
for every $\varepsilon>0$ and finite subsets $\omega\subseteq\Omega$ and
$\chi\subseteq\mathcal{H}$ there is a finite-rank orthogonal projection $P$
such that $\left\Vert \left[  P,T\right]  \right\Vert <\varepsilon$ if
$T\in\omega$ and $\left\Vert \left(  1-P\right)  h\right\Vert <\varepsilon$ if
$h\in\chi$. In fact, if $\mathcal{A}$ is separable, then $\Omega$ is a
quasidiagonal set of operators if and only if there exist an increasing
sequence of finite rank projections, $P_{1}\leq P_{2}\leq\cdots,$ such that,
for each $a\in\Omega,$ $\left\Vert \left[  a,P_{n}\right]  \right\Vert
\rightarrow0$ and $P_{n}\rightarrow I_{\mathcal{H}}$ (strong operator
topology) as $n\rightarrow\mathcal{1}$. A C*-algebra $\mathcal{A}$ is
quasidiagonal (QD) if there is a faithful representation $\rho$ such that
$\rho\left(  \mathcal{A}\right)  $ is a quasidiagonal set of operators. Recall
that a faithful representation $\pi:\mathcal{A\rightarrow B}\left(
\mathcal{H}\right)  $ is called essential if $\pi\left(  \mathcal{A}\right)  $
contains no nonzero finite rank operators. Voiculescu showed that
$\mathcal{A}$ is QD if and only if $\pi\left(  \mathcal{A}\right)  $ is a
quasidiagonal set of operators for a faithful essential representation $\pi$
of $\mathcal{A}$. The examples of QD algebras include all abelian C*-algebras
and finite-dimensional C*-algebras as well as residually finite-dimensional
C*-algebras. For more information about QD C*-algebras, we refer the reader to
\cite{BN}, \cite{V}, \cite{Vo}.

All C*-algebras in this note are unital. In \cite{BK}, we know that all
separable QD C*-algebras are Blackadar and Kirchberg's MF algebras. It is well
known that the reduced free group C*-algebra $C_{r}^{\ast}\left(
F_{2}\right)  $ is not QD. Haagerup and Thorbj$\phi$rnsen showed that
$C_{r}^{\ast}\left(  F_{2}\right)  $ is MF (\cite{HT}). This implies that the
family of all separable QD C*-algebras are strictly contained in the set of MF C*-algebras.

In this note, we are interested in the question of whether the unital full
free products of QD C*-algebras with amalgamation over a common
finite-dimensional C*-algebra are QD again. In \cite{LS}, a necessary and
sufficient condition is given for a unital full free product of RFD
C*-algebras with amalgamation over a finite-dimensional C*-algebra to be RFD
again. Similar result hold for unital MF algebras (\cite{LS2}). Based on these
results and the relationship among RFD C*-algebras, MF C*-algebras and QD
C*-algebras, it is natural to ask whether the same things will happen when we
consider QD C*-algebras. For the case when the common part of two QD
C*-algebras in a unital full amalgamated free product is *-isomorphic to a
full matrix algebra (Proposition 1, \cite{LS}), the answer is affirmative. In
this note, we consider the case when the common part is a finite-dimensional
C*-algebra. First of all, we will give two corollaries about QD C*-algebras
based on Voiculescu's result. Then, we will show that, under a certain
condition, a unital full amalgamated free product of two separable QD
C*-algebras with amalgamation over a finite-dimensional C*-algebra is QD
again. As an application, we consider the case when two unital C*-algebras in
a unital full amalgamated free product are both AF. We will show that a full
free product of two AF algebras with amalgamation over a finite-dimensional
C*-subalgebra is a QD C*-algebra if there are faithful tracial states on each
of these two AF algebras such that the restrictions on the common subalgebra agree.

A brief overview of this paper is as follows. In Section 2, we fix some
notation and give two corollaries about QD C*-algebras based on Voiculescu's
abstract characterization. Section 3 is devoted to results on the full
amalgamated free products of two unital QD C*-algebras. We first consider
unital full free products of unital QD C*-algebras with amalgamation over
finite-dimensional C*-subalgebras. Then we consider the case when two QD
C*-algebras are both AF algebras.

\section{Unital QD C*-algebras}

The examples of QD algebras contains all RFD C*-algebras and AF algebras.
Voiculescu's result (Theorem 1, \cite{V}) give an abstract (i.e.
representation free) characterization of QD C*-algebras which is crucial in
the rest of this paper. In the rest of this paper, we will only be concerned
with separable C*-algebras and representations on separable Hilbert spaces.

We denote the set of all bounded operators on $\mathcal{H}$ by $\mathcal{B}%
\left(  \mathcal{H}\right)  $. Suppose $\left\{  x,x_{k}\right\}
_{k=1}^{\mathcal{1}}$ is a family of elements in $\mathcal{B}\left(
\mathcal{H}\right)  $. We say that $x_{k}\rightarrow x$ in $\ast$-s.o.t
($\ast$-strong operator topology) if and only if $x_{k}\rightarrow x$ in s.o.t
(strong operator topology) and $x_{k}^{\ast}\rightarrow x^{\ast}$ in s.o.t.

We use the notation C*$\left(  x_{1},x_{2,}\cdots\right)  $ to denote the
unital C*-algebra generated by $\left\{  x_{1},x_{2},\cdots\right\}  .$ Let
$\mathbb{C\langle}\mathbf{X}_{1},\ldots,\mathbf{X}_{n}\mathbb{\rangle}$ be the
set of all noncommutative polynomials in the indeterminants $\mathbf{X}%
_{1},\ldots,\mathbf{X}_{n}$. Let $\mathbb{C}_{\mathbb{Q}}=\mathbb{Q}%
+i\mathbb{Q}$ denote the complex-rational numbers, i.e., the numbers whose
real and imaginary parts are rational. Then the set $\mathbb{C}_{\mathbb{Q}%
}\mathbb{\langle}\mathbf{X}_{1},\ldots,\mathbf{X}_{n}\mathbb{\rangle}$ of
noncommutative polynomials with complex-rational coefficients is countable.
Throughout this paper we write%

\[
\mathbb{C\langle}\mathbf{X}_{1},\mathbf{X}_{2},\cdots\mathbb{\rangle=\cup
}_{m=1}^{\infty}\mathbb{C\langle}\mathbf{X}_{1},\mathbf{X}_{2},\cdots
\mathbf{X}_{m}\mathbb{\rangle}\text{,}%
\]
and
\[
\mathbb{C}_{\mathbb{Q}}\mathbb{\langle}\mathbf{X}_{1},\mathbf{X}_{2}%
,\cdots\mathbb{\rangle=\cup}_{m=1}^{\infty}\mathbb{C}_{\mathbb{Q}%
}\mathbb{\langle}\mathbf{X}_{1},\mathbf{X}_{2},\cdots\mathbf{X}_{m}%
\mathbb{\rangle}.
\]

Suppose $\{\mathcal{M}_{k_{n}}(\mathbb{C})\}_{n=1}^{\infty}$ is a sequence of
complex matrix algebras. We introduce the C*-direct product $\prod
_{m=1}^{\infty}\mathcal{M}_{k_{m}}(\mathbb{C)}$ of $\{\mathcal{M}_{k_{n}%
}(\mathbb{C})\}_{n=1}^{\infty}$ as follows:
\[
\prod_{n=1}^{\infty}\mathcal{M}_{k_{n}}(\mathbb{C})=\{(Y_{n})_{n=1}^{\infty
}\ |\ \forall\ n\geq1,\ Y_{n}\in\mathcal{M}_{k_{n}}(C)\ \text{ and
}\ \left\Vert (Y_{n})_{n=1}^{\infty}\right\Vert =\sup_{n\geq1}\Vert Y_{n}%
\Vert<\infty\}.
\]
Furthermore, we can introduce a norm-closed two sided ideal in $\prod
_{n=1}^{\infty}\mathcal{M}_{k_{n}}(\mathbb{C})$ as follows:
\[
\overset{\infty}{\underset{n=1}{\sum}}\mathcal{M}_{k_{n}}(\mathbb{C})=\left\{
\left(  Y_{n}\right)  _{n=1}^{\infty}\in\prod_{n=1}^{\infty}\mathcal{M}%
_{k_{n}}(\mathbb{C}):\lim\limits_{n\rightarrow\infty}\left\Vert Y_{n}%
\right\Vert =0\right\}  .
\]
Let $\pi$ be the quotient map from $\prod_{n=1}^{\infty}\mathcal{M}_{k_{n}%
}(\mathbb{C})$ to $\prod_{n=1}^{\infty}\mathcal{M}_{k_{n}}(\mathbb{C}%
)/\overset{\infty}{\underset{n=1}{\sum}}\mathcal{M}_{k_{n}}(\mathbb{C})$.
Then
\[
\prod_{n=1}^{\infty}\mathcal{M}_{k_{n}}(\mathbb{C})/\overset{\infty}%
{\underset{n=1}{\sum}}\mathcal{M}_{k_{n}}(\mathbb{C})
\]
is a unital C*-algebra. If we denote $\pi\left(  \left(  Y_{n}\right)
_{n=1}^{\infty}\right)  $ by $\left[  \left(  Y_{n}\right)  _{n}\right]  $,
then
\begin{equation}
\left\Vert \left[  \left(  Y_{n}\right)  _{n}\right]  \right\Vert
=\underset{n\rightarrow\mathcal{1}}{\lim\sup}\left\Vert Y_{n}\right\Vert
\leq\sup_{n}\left\Vert Y_{n}\right\Vert =\left\Vert \left(  Y_{n}\right)
_{n}\right\Vert \in\prod_{n=1}^{\infty}\mathcal{M}_{k_{n}}(\mathbb{C})
\tag{0}\label{0}%
\end{equation}

Recall that a C*-algebra is residually finite-dimensional (RFD) if it has a
separating family of finite-dimensional representations. If a separable C*-
algebra $\mathcal{A}$ can be embedded into $%
{\textstyle\prod\limits_{k}}
\mathcal{M}_{n_{k}}\left(  \mathbb{C}\right)  /\sum_{k}\mathcal{M}_{n_{k}%
}\left(  \mathbb{C}\right)  $ for a sequence of positive integers $\left\{
n_{k}\right\}  _{k=1}^{\mathcal{1}},$ then $\mathcal{A}$ is called an MF
algebra. Many properties of MF algebras were discussed in \cite{BK}. Note that
the family of all RFD C*-algebras is strictly contained in the family of all
QD C*-algebras, and all QD C*-algebras are MF C*-algebras. D. Hadwin give a
characterization of unital RFD C*-algebras (Theorem 6, \cite{H3}) and a
similar characterization of unital MF algebras is given by Hadwin, Li and Shen
in \cite{HLS}. Based on proceeding characterizations of RFD C*-algebras and MF
C*-algebras respectively, we are expecting to see the distinction of
quasidiagonal C*-algebras.

Now, we are ready to give a corollary of Voiculescu's result (Theorem 1,
\cite{V}).

\begin{corollary}
\label{55}Suppose $\mathcal{A}$ is a unital separable C*-algebra. Then
$\mathcal{A}$ is quasidiagonal if and only if, for any faithful unital
essential representation $\pi:\mathcal{A\rightarrow B\ }\left(  \mathcal{H}%
\right)  $ on a separable Hilbert space, there are sequences $\left\{
P_{n}\right\}  $ of finite-rank projections with $P_{n}\rightarrow I$ in
$s.o.t$ and unital completely positive mappings $\left\{  \varphi_{n}\right\}
$ from $\mathcal{A}$ into $\mathcal{B}\left(  P_{n}\mathcal{H}P_{n}\right)  $
such that
\[
\varphi_{n}\left(  a\right)  \rightarrow\pi\left(  a\right)  \text{ \ in s.o.t
for any }a\in\mathcal{A}%
\]
and%
\[
\left\Vert \varphi_{n}\left(  ab\right)  -\varphi_{n}\left(  a\right)
\varphi_{n}\left(  b\right)  \right\Vert \rightarrow0\text{ for any }%
a,b\in\mathcal{A}%
\]

\end{corollary}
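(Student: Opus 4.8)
The plan is to derive this characterization from Voiculescu's abstract theorem on quasidiagonality. Voiculescu's result states that $\mathcal{A}$ is quasidiagonal if and only if there exists a sequence of unital completely positive (u.c.p.) maps $\psi_n : \mathcal{A} \to \mathcal{M}_{k_n}(\mathbb{C})$ that are asymptotically multiplicative ($\|\psi_n(ab) - \psi_n(a)\psi_n(b)\| \to 0$) and asymptotically isometric ($\|\psi_n(a)\| \to \|a\|$) for all $a, b \in \mathcal{A}$. So one direction of the argument is a matter of converting between the ``matrix model'' formulation and the ``compression-on-a-representation'' formulation stated in the corollary, with the key extra ingredient being that the representation may be taken to be an \emph{arbitrary} faithful essential representation.

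First I would prove the forward direction. Assume $\mathcal{A}$ is quasidiagonal and fix a faithful unital essential representation $\pi : \mathcal{A} \to \mathcal{B}(\mathcal{H})$. By Voiculescu's theorem (the version allowing arbitrary faithful essential representations, which is exactly the content the excerpt attributes to him), $\pi(\mathcal{A})$ is a quasidiagonal set of operators. Since $\mathcal{A}$ is separable, this gives an increasing sequence of finite-rank projections $P_n \to I$ in s.o.t. with $\|[P_n, \pi(a)]\| \to 0$ for each $a \in \mathcal{A}$. The natural candidate for the u.c.p.\ maps is the compression $\varphi_n(a) = P_n \pi(a) P_n \in \mathcal{B}(P_n\mathcal{H}P_n)$. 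Each $\varphi_n$ is unital (for large $n$, using $\varphi_n(1) = P_n$, which is the identity of $\mathcal{B}(P_n\mathcal{H}P_n)$) and completely positive as a compression of a $*$-homomorphism. The s.o.t.\ convergence $\varphi_n(a) \to \pi(a)$ follows because $P_n \to I$ strongly and $\pi(a)$ is bounded. For asymptotic multiplicativity I would compute
\[
\varphi_n(ab) - \varphi_n(a)\varphi_n(b) = P_n\pi(a)\pi(b)P_n - P_n\pi(a)P_n\pi(b)P_n = P_n\pi(a)(I - P_n)\pi(b)P_n,
\]
and then estimate the norm using $\|P_n\pi(a)(I-P_n)\| = \|[P_n,\pi(a)](I-P_n)\| \leq \|[P_n,\pi(a)]\| \to 0$, so the whole expression tends to $0$.

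For the converse, I would assume the stated sequences $\{P_n\}$ and $\{\varphi_n\}$ exist for some fixed faithful essential representation $\pi$ and show $\mathcal{A}$ is quasidiagonal by verifying Voiculescu's matricial condition. Here the $\varphi_n$ already land in the finite-dimensional algebras $\mathcal{B}(P_n\mathcal{H}P_n) \cong \mathcal{M}_{k_n}(\mathbb{C})$ and are asymptotically multiplicative by hypothesis, so the only remaining point is the asymptotic isometry $\|\varphi_n(a)\| \to \|a\|$. One direction, $\limsup_n \|\varphi_n(a)\| \leq \|a\|$, is immediate since each $\varphi_n$ is u.c.p.\ and hence contractive. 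For the reverse inequality I would use the s.o.t.\ convergence $\varphi_n(a) \to \pi(a)$ together with lower semicontinuity of the norm in the strong operator topology: for any unit vector $\xi \in \mathcal{H}$ eventually lying in the range of $P_n$, the vectors $\varphi_n(a)\xi \to \pi(a)\xi$, whence $\liminf_n \|\varphi_n(a)\| \geq \|\pi(a)\xi\|$, and taking the supremum over $\xi$ gives $\liminf_n \|\varphi_n(a)\| \geq \|\pi(a)\| = \|a\|$.

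The main obstacle I anticipate is this last lower-bound estimate on the norms: strong convergence $\varphi_n(a) \to \pi(a)$ alone does not make $\|\varphi_n(a)\|$ converge to $\|\pi(a)\|$ in general, and one must argue carefully that approximating $\|\pi(a)\|$ by the action on a fixed unit vector, combined with the eventual near-invariance of that vector under $P_n$, forces the matricial norms up to $\|a\|$. A clean way to handle this uniformly is to invoke asymptotic multiplicativity to pass to self-adjoint elements $a^*a$ and control $\|\varphi_n(a)\|^2 = \|\varphi_n(a)^*\varphi_n(a)\|$ against $\|\varphi_n(a^*a)\|$, reducing the problem to the positive case where the functional-calculus and s.o.t.\ estimates are most transparent. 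Everything else is routine verification that compressions are u.c.p.\ and that the commutator estimates transfer to the stated asymptotic multiplicativity.
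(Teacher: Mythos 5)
Your proof is correct and follows essentially the same route as the paper: the forward direction invokes Voiculescu's theorem that any faithful essential representation of a QD algebra gives a quasidiagonal set of operators and then compresses by the resulting projections, while the converse verifies Voiculescu's abstract u.c.p.\ criterion, obtaining the norm lower bound from the strong convergence $\varphi_n(a)\to\pi(a)$ exactly as the paper does (it picks near-norming unit vectors from a dense subset of the unit ball). The ``obstacle'' you flag at the end is not a real one: viewing $\mathcal{B}(P_n\mathcal{H}P_n)\subseteq\mathcal{B}(\mathcal{H})$, the hypothesis gives $\varphi_n(a)\xi\to\pi(a)\xi$ for \emph{every} vector $\xi$, so $\liminf_n\left\Vert \varphi_n(a)\right\Vert \geq\left\Vert \pi(a)\xi\right\Vert$ for every unit vector with no need for $\xi$ to lie eventually in the range of $P_n$, nor for the reduction to $a^{*}a$.
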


\begin{proof}
$\left(  \Longleftarrow\right)  $Let $\left\{  a_{1},\cdots,a_{m}\right\}
\mathcal{\ }$be a finite subset of $\mathcal{A}$, $\left\{  x_{1},x_{2}%
,\cdots\right\}  $ be a dense subset of $\left(  \mathcal{H}\right)  _{1}.$
Suppose $\pi:\mathcal{A\rightarrow B}\left(  \mathcal{H}\right)  $ is a
faithful unital essential *-homomorphism, and $\left\{  \varphi_{n}\right\}  $
is a sequence of unital completely positive maps in the hypothesis. Then for
any $\varepsilon>0,$ there is an integer $n_{\varepsilon}$ and $x_{i_{1}%
},\cdots,x_{i_{m}}$ such that and%
\[
\left\Vert \pi\left(  a_{j}\right)  x_{i_{j}}\right\Vert >\left\Vert
a_{j}\right\Vert -\frac{\varepsilon}{2}%
\]
and%
\[
\left\Vert \varphi_{n_{\varepsilon}}\left(  a_{j}\right)  x_{i_{j}}-\pi\left(
a_{j}\right)  x_{i_{j}}\right\Vert <\frac{\varepsilon}{2}%
\]
as well as
\[
\left\Vert \varphi_{n_{\varepsilon}}\left(  ab\right)  -\varphi
_{n_{\varepsilon}}\left(  a\right)  \varphi_{n_{\varepsilon}}\left(  b\right)
\right\Vert <\varepsilon.
\]
This implies that $\left\Vert \varphi_{n_{\varepsilon}}\left(  a_{j}\right)
\right\Vert \geq\left\Vert a_{j}\right\Vert -\varepsilon$ $\left(
j=1,\cdots,m\right)  $ and $\left\Vert \varphi_{n_{\varepsilon}}\left(
ab\right)  -\varphi_{n_{\varepsilon}}\left(  a\right)  \varphi_{n_{\varepsilon
}}\left(  b\right)  \right\Vert <\varepsilon.$ Then by Theorem 1, \cite{V}, we
have $\pi\left(  \mathcal{A}\right)  $ is a QD algebra.

$\left(  \Rightarrow\right)  $Suppose $\mathcal{A}$ is a separable
quasidiagonal C*-algebra and $\pi:\mathcal{A\rightarrow B}\left(
\mathcal{H}\right)  $ is a faithful unital essential representation on a
separable Hilbert space. Then $\pi\left(  \mathcal{A}\right)  $ is a
quasidiagonal set. Therefore, we can find a sequence $\left\{  P_{n}\right\}
$ of projections with $P_{n}\rightarrow I$ in $s.o.t$ such that $P_{n}%
\pi\left(  a\right)  P_{n}\rightarrow\pi\left(  a\right)  $ in s.o.t. and
\[
\left\Vert P_{n}\pi\left(  ab\right)  P_{n}-P_{n}\pi\left(  a\right)
P\pi\left(  b\right)  P_{n}\right\Vert \rightarrow0.
\]
Now let $\varphi_{n}\left(  a\right)  =P_{n}\pi\left(  ab\right)  P_{n}$, then
the proof is completed.
\end{proof}

Since each separable QD C*-algebra $\mathcal{A}$ is MF, this implies that
$\mathcal{A}$ can be embedded into $%
{\displaystyle\prod}
\mathcal{M}_{k_{m}}\left(  \mathbb{C}\right)  /\sum$ $\mathcal{M}_{k_{m}%
}\left(  \mathbb{C}\right)  $ for a sequence $\left\{  k_{m}\right\}  $ of
integers. But in general, an MF algebra may not be a QD C*-algebra. Next
proposition shows the difference between QD C*-algebras and general MF
C*-algebras. Note that $%
{\displaystyle\prod}
\mathcal{M}_{k_{m}}\left(  \mathbb{C}\right)  $ can be viewed as a
C*-subalgebra of $\mathcal{B}\left(  \oplus_{m=1}^{\mathcal{1}}\mathbb{C}%
^{k_{m}}\right)  $. If $\mathcal{K}\left(  \oplus_{m=1}^{\mathcal{1}%
}\mathbb{C}^{k_{m}}\right)  $ denote the set of all compact operators acting
on $\oplus_{m=1}^{\mathcal{1}}\mathbb{C}^{k_{m}},$ then it is not hard to see
that $\mathcal{K}\left(  \oplus_{m=1}^{\mathcal{1}}\mathbb{C}^{k_{m}}\right)
\cap%
{\displaystyle\prod}
\mathcal{M}_{k_{m}}\left(  \mathbb{C}\right)  =\sum$ $\mathcal{M}_{k_{m}%
}\left(  \mathbb{C}\right)  .$

Let $\pi:\mathcal{B}\left(  \mathcal{H}\right)  \rightarrow\mathcal{Q}\left(
\mathcal{H}\right)  $ be the canonical mapping onto the Calkin algebra and
$\mathcal{A}$ is a unital C*-algebra. Suppose $\varphi:\mathcal{A\rightarrow
B}\left(  \mathcal{H}\right)  $ is a unital completely positive map then we
say that $\varphi$ is a representation modulo the compacts if $\pi\circ
\varphi:\mathcal{A\rightarrow Q}\left(  \mathcal{H}\right)  $ is a
*-homomorphism. If $\pi\circ\varphi$ is injective then we say that $\varphi$
is a faithful representation modulo the compacts. The following proposition
can be found in Proposition 3.1.3 and the preceding remark in \cite{BK}, we
include a proof for the convenience of the reader.

\begin{proposition}
\label{66}Suppose $\mathcal{A}$ is a unital separable C*-algebra,
$\mathcal{A}$ is QD if and only if there is a sequence $\left\{
k_{m}\right\}  $ of integers and an embedding $\rho$ from $\mathcal{A}$ into $%
{\displaystyle\prod}
\mathcal{M}_{k_{m}}\left(  \mathbb{C}\right)  /\sum$ $\mathcal{M}_{k_{m}%
}\left(  \mathbb{C}\right)  $ such that $\rho$ can be lifted to a faithful
representation of $\mathcal{A}$ into $%
{\displaystyle\prod}
\mathcal{M}_{k_{m}}\left(  \mathbb{C}\right)  $ modulo the compacts.
\end{proposition}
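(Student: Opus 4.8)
The plan is to prove both implications by translating between quasidiagonality (in the local form of Voiculescu's abstract characterization, Theorem 1 of \cite{V}) and the existence of an asymptotically multiplicative, asymptotically isometric sequence of unital completely positive (ucp) maps into matrix algebras, packaged as a single ucp map into $\prod_m \mathcal{M}_{k_m}(\mathbb{C})$. Throughout I write $\mathcal{H}=\oplus_m \mathbb{C}^{k_m}$, view $\prod_m \mathcal{M}_{k_m}(\mathbb{C})$ as the block-diagonal subalgebra of $\mathcal{B}(\mathcal{H})$, let $\pi$ be the quotient map onto $\prod_m \mathcal{M}_{k_m}(\mathbb{C})/\sum_m \mathcal{M}_{k_m}(\mathbb{C})$, and invoke the remark preceding the statement, namely $\mathcal{K}(\mathcal{H})\cap \prod_m \mathcal{M}_{k_m}(\mathbb{C})=\sum_m \mathcal{M}_{k_m}(\mathbb{C})$, which identifies $\prod_m/\sum_m$ with a C*-subalgebra of the Calkin algebra $\mathcal{Q}(\mathcal{H})$. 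Consequently, for a ucp map $\Phi=(\varphi_m)_m:\mathcal{A}\to\prod_m \mathcal{M}_{k_m}(\mathbb{C})$, being a faithful representation modulo the compacts is exactly the assertion that $\pi\circ\Phi$ is an injective $*$-homomorphism into $\prod_m/\sum_m$.

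For the forward implication I would start from Corollary \ref{55}: a faithful unital essential representation $\pi_0:\mathcal{A}\to\mathcal{B}(\mathcal{H}_0)$ yields finite-rank projections $P_n\to I$ in s.o.t. and ucp maps $\varphi_n:\mathcal{A}\to\mathcal{B}(P_n\mathcal{H}_0P_n)\cong \mathcal{M}_{k_n}(\mathbb{C})$ with $\varphi_n(a)\to\pi_0(a)$ in s.o.t. and $\|\varphi_n(ab)-\varphi_n(a)\varphi_n(b)\|\to 0$. Assembling these into $\Phi=(\varphi_n)_n$ gives a ucp map into $\prod_n \mathcal{M}_{k_n}(\mathbb{C})$; setting $\rho=\pi\circ\Phi$, asymptotic multiplicativity forces $\rho(ab)=\rho(a)\rho(b)$ and the $*$-linearity of each $\varphi_n$ makes $\rho$ a $*$-homomorphism. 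Injectivity follows from $\|\rho(a)\|=\limsup_n\|\varphi_n(a)\|$: contractivity of ucp maps gives $\le\|a\|$, while evaluating on unit vectors and using $\varphi_n(a)x\to\pi_0(a)x$ together with the faithfulness (hence isometry) of $\pi_0$ gives $\ge\|a\|$. By the identification above, $\Phi$ is then the required faithful representation modulo the compacts lifting $\rho$.

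For the converse I would unpack the hypothesis: a ucp lift $\Phi=(\varphi_m)_m$ of the embedding $\rho$ satisfies $\|\varphi_m(ab)-\varphi_m(a)\varphi_m(b)\|\to 0$ (since $\rho$ is multiplicative) and $\limsup_m\|\varphi_m(a)\|=\|\rho(a)\|=\|a\|$ (since $\rho$ is an injective, hence isometric, $*$-homomorphism). To verify the local Voiculescu criterion for a finite set $F=\{a_1,\dots,a_r\}\subseteq\mathcal{A}$ and $\varepsilon>0$, I would choose indices $n_1<\cdots<n_r$, all large enough that the multiplicativity defect on $F$ is below $\varepsilon$, with $n_j$ chosen (possible because the relevant $\limsup$ equals $\|a_j\|$) so that $\|\varphi_{n_j}(a_j)\|>\|a_j\|-\varepsilon$, and then form the direct sum $\psi=\varphi_{n_1}\oplus\cdots\oplus\varphi_{n_r}$ into $\bigoplus_j \mathcal{M}_{k_{n_j}}(\mathbb{C})\subseteq \mathcal{M}_N(\mathbb{C})$. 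Then $\|\psi(a_j)\|\ge\|\varphi_{n_j}(a_j)\|>\|a_j\|-\varepsilon$ for each $j$, while the multiplicativity defect of $\psi$ on $F$ stays below $\varepsilon$, so Theorem 1 of \cite{V} yields quasidiagonality.

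The main obstacle I anticipate is precisely the norm bookkeeping in the converse: the quotient norm on $\prod_m/\sum_m$ is a $\limsup$ (see \eqref{0}), so no single block $\varphi_m$ need be almost isometric simultaneously on every element of $F$, and the naive search for one good index $m$ fails. The direct-sum-of-blocks device circumvents this by allotting each element of $F$ its own near-optimal block while keeping the ambient algebra finite dimensional; checking that this operation preserves the ucp property and only mildly degrades multiplicativity is the crux. A secondary point requiring care is the clean identification of \emph{representation modulo the compacts} with multiplicativity of $\pi\circ\Phi$, which rests on the stated equality $\mathcal{K}(\mathcal{H})\cap\prod_m \mathcal{M}_{k_m}(\mathbb{C})=\sum_m \mathcal{M}_{k_m}(\mathbb{C})$.
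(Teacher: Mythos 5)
Your proposal is correct and takes essentially the same route as the paper: your converse is the paper's argument almost verbatim (the blocks of the ucp lift are asymptotically multiplicative with $\limsup$ of block norms equal to $\|a\|$, and the direct-sum-of-near-optimal-blocks device feeds Voiculescu's Theorem 1 of \cite{V}). Your forward direction likewise assembles compressions into a block-diagonal faithful representation modulo the compacts exactly as the paper does, only routed through Corollary \ref{55} (whose proof is the same compression construction) rather than through Voiculescu's criterion directly.
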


\begin{proof}
$\left(  \Longrightarrow\right)  $ Suppose $\mathcal{A}$ is a separable
quasidiagonal C*-algebra and $\pi:\mathcal{A\rightarrow B}\left(
\mathcal{H}\right)  $ is a faithful unital essential representation on a
separable Hilbert space. Then, for any given $\varepsilon>0,$ and
$\mathcal{F\subseteq A}$ finite subset, there is a representation
$\rho:\mathcal{A\rightarrow B}\left(  \mathcal{K}\right)  $ and a finite-rank
orthogonal projection $P$ such that
\[
\left\Vert P\rho\left(  a\right)  P\right\Vert \geq\left\Vert a\right\Vert
-\varepsilon
\]%
\[
\left\Vert \left[  P,\rho\left(  a\right)  \right]  \right\Vert \leq
\varepsilon
\]
by Theorem 1, \cite{V}. Let $1\in\mathcal{F}_{1}\subseteq\mathcal{F}%
_{2}\subseteq\cdots$be a sequence of finite subsets of $\mathcal{A}$ with
$\overline{\cup\mathcal{F}_{n}}=\mathcal{A}$. Assume that $\mathcal{F}%
_{n},P_{n},\mathcal{K}_{n},\varepsilon_{n}=n^{-2}$ and $\rho_{n}$ satisfy
above requirements. There is no loss of generality in assuming, we assume that
$\mathcal{K}_{n}$ is separable for each $n.$ Let%
\[
X=\oplus_{k\geq1}\mathcal{K}_{k}\text{, \ \ }\widetilde{\rho}=\oplus_{k\geq
1}\rho_{k}%
\]%
\[
Y=\oplus_{k\geq1}P_{k}\mathcal{K}_{k}\subseteq X
\]
Since, for each $a\in\cup\mathcal{F}_{n}$, there is $k_{o}\in\mathbb{N}$ such
that $\left\Vert \left[  P_{k},\rho\left(  a\right)  \right]  \right\Vert \leq
k^{-2}$ and $\left\Vert P_{k}\rho\left(  a\right)  P_{k}\right\Vert
\geq\left\Vert a\right\Vert -k^{-2}$ for each $k\geq k_{o},$ we have that
$P_{Y}\widetilde{\rho}|_{Y}$ is a faithful representations modulo the compacts
from $\mathcal{A}$ to $\mathcal{B}\left(  Y\right)  .$

Let $k_{m}=\dim P_{m}\mathcal{K}_{m}.$ Then $\mathcal{A}$ can be embedded into
$%
{\displaystyle\prod}
\mathcal{M}_{k_{m}}\left(  \mathbb{C}\right)  /\sum$ $\mathcal{M}_{k_{m}%
}\left(  \mathbb{C}\right)  $ such that this embedding can be lifted to a
faithful representation modulo the compacts.

$\left(  \Longleftarrow\right)  $ Assume that there is a sequence $\left\{
k_{m}\right\}  $ of integers and an embedding $\rho$ from $\mathcal{A}$ into $%
{\displaystyle\prod}
\mathcal{M}_{k_{m}}\left(  \mathbb{C}\right)  /\sum$ $\mathcal{M}_{k_{m}%
}\left(  \mathbb{C}\right)  $ such that $\rho$ can be lifted to a faithful
representation $\widetilde{\rho}$ of $\mathcal{A}$ into $%
{\displaystyle\prod}
\mathcal{M}_{k_{m}}\left(  \mathbb{C}\right)  \subseteq\mathcal{B}\left(
\oplus_{m=1}^{\mathcal{1}}\mathbb{C}^{k_{m}}\right)  $ modulo the compacts.
Let $P_{m}=P_{\mathbb{C}^{k_{m}}}\in\mathcal{B}\left(  \oplus_{m=1}%
^{\mathcal{1}}\mathbb{C}^{k_{m}}\right)  .$ Define a unital completely
positive map $\varphi_{n}=P_{n}\widetilde{\rho}P_{n}$ from $\mathcal{A}$ to
$\mathcal{B}\left(  \mathbb{C}^{k_{m}}\right)  =\mathcal{M}_{k_{m}}\left(
\mathbb{C}\right)  $ for each $m\in\mathbb{N}$. Since $\widetilde{\rho}$ is a
faithful representation modulo the compacts, we have $\left\Vert \varphi
_{n}\left(  ab\right)  -\varphi_{n}\left(  a\right)  \varphi_{n}\left(
b\right)  \right\Vert \rightarrow0$ as $n\rightarrow\mathcal{1}$. Suppose
$\mathcal{F\subseteq A}$ is a finite subset and $\varepsilon>0$. Assume
\[
\widetilde{\rho}\left(  a\right)  =\left(  a_{m}\right)  _{m=1}^{\mathcal{1}%
}\in%
{\displaystyle\prod}
\mathcal{M}_{k_{m}}\left(  \mathbb{C}\right)  \text{ for }a\in\mathcal{A}%
\text{.}%
\]
Then $\varphi_{n}(a)=P_{n}\rho\left(  a\right)  P_{n}=a_{n}.$ Since
$\underset{m\rightarrow\mathcal{1}}{\lim\sup}\left\Vert a_{m}\right\Vert
_{\mathcal{M}_{k_{m}}\left(  \mathbb{C}\right)  }=\left\Vert a\right\Vert ,$we
can find natural numbers $m_{1}$ and $m_{2}$ with $m_{1}\leq m_{2}$ such that
\[
\left\vert \left(  \sup_{m_{1}\leq l\leq m_{2}}\left\Vert a_{l}\right\Vert
_{\mathcal{M}_{l}\left(  \mathbb{C}\right)  }\right)  -\left\Vert a\right\Vert
\right\vert \leq\varepsilon\text{ for each }a\in\mathcal{F}%
\]
and
\[
\left\Vert \left(  \oplus_{m_{1}\leq l\leq m_{2}}\left(  ab\right)
_{l}\right)  -\oplus_{m_{1}\leq l\leq m_{2}}a_{l}b_{l}\right\Vert =\sup
_{m_{1}\leq l\leq m_{2}}\left\Vert \varphi_{l}\left(  ab\right)  -\varphi
_{l}\left(  a\right)  \varphi_{l}\left(  b\right)  \right\Vert \leq
\varepsilon\text{ for }a,b\in\mathcal{F}%
\]
Let $k=\sum_{l=m_{1}}^{m_{2}}l$ and $\varphi=\varphi_{m_{1}}\oplus\cdots
\oplus\varphi_{m_{2}}.$ Then $\varphi$ is a unital completely positive map
from $\mathcal{A}$ to $\mathcal{M}_{k_{m_{1}}}\left(  \mathbb{C}\right)
\oplus\cdots\oplus\mathcal{M}_{k_{m_{2}}}\left(  \mathbb{C}\right)  $ with
$\left\Vert \varphi\left(  a\right)  \right\Vert \geq\left\Vert a\right\Vert
-\varepsilon$ and $\left\Vert \varphi\left(  ab\right)  -\varphi\left(
a\right)  \varphi\left(  b\right)  \right\Vert \leq\varepsilon.$ This implies
that $\mathcal{A}$ is QD by Theorem 1, \cite{V}.
\end{proof}

\section{Unital Full Amalgamated Free Product of QD algebras}

Now we are ready to consider the unital full free products of two QD
C*-algebra with amalgamation over a finite-dimensional C*-subalgebra.

Given $\left(  \mathcal{A}_{i}\right)  _{i\in I\text{ }}$unital C*-algebras
with a common unital C*-subalgebra $\mathcal{B}$ and faithful conditional
expectation $E_{i}:\mathcal{A}_{i}\rightarrow\mathcal{B}$, look at the
algebraic free product $\mathcal{A=\circledast}_{\mathcal{B}}\mathcal{A}_{i}$
with amalgamation over $\mathcal{B}$, which is a $\mathcal{B}$-ring. Then
$\mathcal{B}$-bimodule decompositions $\mathcal{A}_{i}=\mathcal{B\oplus A}%
_{i}^{0}$ where $\mathcal{A}_{i}^{0}=\ker E_{i}=\left\{  a-E_{i}\left(
a\right)  :a\in\mathcal{A}_{i}\right\}  $ yield the following $\mathcal{B}%
$-bimodule decomposition (\cite{BF}):%
\[
\mathcal{\circledast}_{\mathcal{B}}\mathcal{A}_{i}=\mathcal{B\oplus\oplus
}_{i\neq\cdots\neq i_{n}:n\geq1}\mathcal{A}_{i_{1}}^{0}\otimes_{\mathcal{B}%
}\cdots\otimes_{\mathcal{B}}\mathcal{A}_{i_{n}}^{0}.
\]
The full amalgamated free product of $\left(  \mathcal{A}_{i},E_{i}\right)
_{i\in I}$ denoted by $\ast_{\mathcal{B}}\mathcal{A}_{i}$ is the completion of
$\mathcal{\circledast}_{\mathcal{B}}\mathcal{A}_{i}$ in the C*-norm.
\[
\left\Vert a\right\Vert =\sup\left\{  \left\Vert \pi\left(  a\right)
\right\Vert :\pi\text{ *-representation of }\mathcal{\circledast}%
_{\mathcal{B}}\mathcal{A}_{i}\right\}  .
\]

The following example shows that a full amalgamated free product of two QD (or
MF, RFD) algebras may not be QD (or MF, RFD) again, even for a unital full
free product of two full matrix algebras with amalgamation over a two
dimensional C*-algebra which is *-isomorphic to $\mathbb{C\oplus C}$.

\begin{example}
Let $\mathcal{D=}\mathbb{C\oplus C}$. Suppose $\varphi_{1}%
:\mathcal{D\rightarrow M}_{2}\left(  \mathbb{C}\right)  $ and $\varphi
_{2}:\mathcal{D\rightarrow M}_{3}\left(  \mathbb{C}\right)  $ are unital
embeddings such that
\[
\varphi_{1}\left(  1\oplus0\right)  =\left(
\begin{array}
[c]{cc}%
1 & 0\\
0 & 0
\end{array}
\right)  \text{ and }\varphi_{2}\left(  1\oplus0\right)  =\left(
\begin{array}
[c]{ccc}%
1 & 0 & 0\\
0 & 0 & 0\\
0 & 0 & 0
\end{array}
\right)
\]
Then $\mathcal{M}_{2}\left(  \mathbb{C}\right)  \underset{\mathcal{D}%
}{\mathcal{\ast}}\mathcal{M}_{3}\left(  \mathbb{C}\right)  $ is not QD. Note
that every QD algebra has a nontrivial tracial state by 2.4 \cite{Vo}. If we
assume that $\mathcal{M}_{2}\left(  \mathbb{C}\right)  \underset{\mathcal{D}%
}{\mathcal{\ast}}\mathcal{M}_{3}\left(  \mathbb{C}\right)  $ is QD, then there
exists a tracial state $\tau$ on $\mathcal{M}_{2}\left(  \mathbb{C}\right)
\underset{\mathcal{D}}{\mathcal{\ast}}\mathcal{M}_{3}\left(  \mathbb{C}%
\right)  .$ So the restrictions of $\tau$ on $\mathcal{M}_{2}\left(
\mathbb{C}\right)  $ and $\mathcal{M}_{3}\left(  \mathbb{C}\right)  $ are the
unique tracial states on $\mathcal{M}_{2}\left(  \mathbb{C}\right)  $ and
$\mathcal{M}_{3}\left(  \mathbb{C}\right)  $ respectively. It follows that
$\tau\left(  \varphi_{1}\left(  1\oplus0\right)  \right)  =\frac{1}{2}\neq$
$\tau$ $\left(  \varphi_{2}\left(  1\oplus0\right)  \right)  =\frac{1}{3}$
which contradicts to the fact that $\varphi_{1}\left(  1\oplus0\right)
=\varphi_{2}\left(  0\oplus1\right)  $ in $\mathcal{M}_{2}\left(
\mathbb{C}\right)  \underset{\mathcal{D}}{\mathcal{\ast}}\mathcal{M}%
_{3}\left(  \mathbb{C}\right)  $. Therefore $\mathcal{M}_{2}\left(
\mathbb{C}\right)  \underset{\mathcal{D}}{\mathcal{\ast}}\mathcal{M}%
_{3}\left(  \mathbb{C}\right)  $ is not QD.
\end{example}

The following lemma is well known.

\begin{lemma}
\label{13.2}Suppose $\mathcal{A=}$C*$\left(  x_{1},x_{2},\cdots\right)  $ and
$\mathcal{B=}$C*$\left(  y_{1},y_{2},\cdots\right)  $ are unital C*-algebras.
Then there is a unital *-homomorphism from $\mathcal{A}$ to $\mathcal{B}$
sending each $x_{k}$ to $y_{k},$ if and only if, for each $\ast$-polynomial
$P\in\mathbb{C}_{\mathbb{Q}}\left\langle \mathbf{X}_{1},\mathbf{X}_{2}%
,\cdots\right\rangle ,$ we have%
\[
\left\Vert P\left(  x_{1},x_{2},\cdots\right)  \right\Vert \geq\left\Vert
P\left(  y_{1},y_{2},\cdots\right)  \right\Vert .
\]

\end{lemma}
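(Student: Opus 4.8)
The plan is to prove the two implications separately; the forward direction is immediate and the reverse direction carries all the content.

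For $(\Rightarrow)$ I would simply observe that any unital $*$-homomorphism $\phi:\mathcal{A}\rightarrow\mathcal{B}$ between C*-algebras is automatically contractive and intertwines the polynomial evaluations, so that $\phi(P(x_1,x_2,\ldots))=P(y_1,y_2,\ldots)$ for every $*$-polynomial $P$. Contractivity then gives $\Vert P(y_1,y_2,\ldots)\Vert\leq\Vert P(x_1,x_2,\ldots)\Vert$, which is the asserted inequality (it even holds for all complex coefficients, not just complex-rational ones).

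For $(\Leftarrow)$ the first step would be to upgrade the hypothesis from complex-rational to arbitrary complex coefficients. Any $*$-polynomial involves only finitely many variables, and once the monomials are fixed the evaluation $P\mapsto P(x_1,x_2,\ldots)$ is linear in the (finitely many) coefficients, hence norm-continuous in them, because the generators are bounded; the same is true at the $y_k$. Since $\mathbb{C}_{\mathbb{Q}}$ is dense in $\mathbb{C}$, I would approximate the coefficients of an arbitrary $P\in\mathbb{C}\langle\mathbf{X}_1,\mathbf{X}_2,\ldots\rangle$ by complex-rational ones and pass to the limit in the inequality to obtain $\Vert P(x_1,x_2,\ldots)\Vert\geq\Vert P(y_1,y_2,\ldots)\Vert$ for every complex-coefficient $*$-polynomial $P$.

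The second step would be to define the homomorphism on the dense $*$-subalgebra. Let $\mathcal{A}_0\subseteq\mathcal{A}$ be the norm-dense unital $*$-subalgebra of all $P(x_1,x_2,\ldots)$, and define $\phi_0(P(x_1,x_2,\ldots))=P(y_1,y_2,\ldots)$. The crucial point is well-definedness: if $P(x_1,x_2,\ldots)=Q(x_1,x_2,\ldots)$ then $(P-Q)(x_1,x_2,\ldots)=0$, so the inequality forces $\Vert(P-Q)(y_1,y_2,\ldots)\Vert=0$, i.e. $P(y_1,y_2,\ldots)=Q(y_1,y_2,\ldots)$. Granting this, $\phi_0$ is visibly a unital $*$-homomorphism (it respects the algebraic operations because polynomial evaluation does), and the inequality says exactly that $\phi_0$ is contractive. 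Finally I would extend $\phi_0$ by continuity from the dense subalgebra $\mathcal{A}_0$ to all of $\mathcal{A}$, landing in $\overline{\{P(y_1,y_2,\ldots)\}}=\mathcal{B}$; the extension remains a unital $*$-homomorphism with $\phi(x_k)=y_k$. I expect the main obstacle to be the pair of points that make the definition legitimate---the coefficient-density upgrade and the consequent well-definedness of $\phi_0$---while the continuity-extension step is routine.
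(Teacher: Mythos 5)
Your proposal is correct, and in fact the paper gives no proof at all of this lemma---it is simply stated as ``well known''---so your argument supplies exactly the standard reasoning the paper is relying on. Both halves of your proof are sound: the forward direction by contractivity of unital $*$-homomorphisms, and the reverse direction by first upgrading the inequality from $\mathbb{C}_{\mathbb{Q}}$ to $\mathbb{C}$ coefficients (legitimate, since for fixed monomials the evaluation is linear in the coefficients and the generators are bounded), then using the inequality applied to $P-Q$ to get well-definedness of $\phi_0\bigl(P(x_1,x_2,\cdots)\bigr)=P(y_1,y_2,\cdots)$ on the dense unital $*$-subalgebra, and finally extending the contractive $*$-homomorphism $\phi_0$ by continuity, noting its range is dense in $\mathcal{B}$.
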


\begin{lemma}
\label{5}(Corollary 4, \cite{BF}) Given $\left(  \mathcal{A}_{i},E_{i}\right)
_{i\in I}$ and $\left(  \mathcal{B}_{i},F_{i}\right)  _{i\in I}$ with
$E_{i}:\mathcal{A}_{i}\rightarrow\mathcal{B}$, $F_{i}:\mathcal{B}%
_{i}\rightarrow\mathcal{B}$ faithful projections of norm one onto the unital
C*-subalgebra $\mathcal{B}$ and the $\mathcal{B}$-linear completely positive
maps $\varphi_{i}:\mathcal{A}_{i}\rightarrow\mathcal{B}_{i},$ there is a
common extension $\Phi:\ast_{\mathcal{B}}\mathcal{A}_{i}\rightarrow
\ast_{\mathcal{B}}\mathcal{B}_{i}$ which is $\mathcal{B}$-linear and
completely positive.
\end{lemma}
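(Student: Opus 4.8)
The plan is to reduce the statement to the existence of a free product of completely positive maps into a single target algebra, and then to realize that free product through a Stinespring dilation amalgamated over $\mathcal{B}$. First I would fix the target: write $\mathcal{C}=\ast_{\mathcal{B}}\mathcal{B}_{i}$ and let $\iota_{i}:\mathcal{B}_{i}\rightarrow\mathcal{C}$ denote the canonical unital inclusions, all of which restrict to one and the same embedding $\sigma:\mathcal{B}\hookrightarrow\mathcal{C}$. Setting $\psi_{i}=\iota_{i}\circ\varphi_{i}:\mathcal{A}_{i}\rightarrow\mathcal{C}$, each $\psi_{i}$ is completely positive (a composition of completely positive maps), is $\mathcal{B}$-linear, and, by $\mathcal{B}$-linearity together with unitality, restricts on $\mathcal{B}$ to the common embedding $\sigma$ independently of $i$. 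Thus it suffices to produce a single $\mathcal{B}$-linear completely positive map $\Phi:\ast_{\mathcal{B}}\mathcal{A}_{i}\rightarrow\mathcal{C}$ with $\Phi|_{\mathcal{A}_{i}}=\psi_{i}$; this $\Phi$ is then the desired common extension. In other words, the whole content is the existence of a free product of completely positive maps agreeing on the amalgam.

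To build $\Phi$ I would dilate. Represent $\mathcal{C}\subseteq\mathcal{B}\left(\mathcal{H}\right)$ faithfully and unitally, and for each $i$ take the minimal Stinespring dilation $(\pi_{i},\mathcal{H}_{i},V_{i})$ of $\psi_{i}$, so that $\pi_{i}:\mathcal{A}_{i}\rightarrow\mathcal{B}\left(\mathcal{H}_{i}\right)$ is a unital representation, $V_{i}:\mathcal{H}\rightarrow\mathcal{H}_{i}$ is an isometry, and $\psi_{i}(a)=V_{i}^{\ast}\pi_{i}(a)V_{i}$. The crucial rigidity is that, because $\psi_{i}|_{\mathcal{B}}=\sigma$ is a $\ast$-homomorphism, $\mathcal{B}$ lies in the multiplicative domain of $\psi_{i}$; hence $\pi_{i}(b)V_{i}=V_{i}\sigma(b)$ for all $b\in\mathcal{B}$, so $V_{i}\mathcal{H}$ is a $\mathcal{B}$-invariant subspace on which $\pi_{i}$ acts through $\sigma$. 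This lets me view each $\mathcal{H}_{i}$ as a pointed Hilbert $\mathcal{B}$-module carrying a distinguished copy $V_{i}\mathcal{H}$ of the vacuum.

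I would then glue these dilations along $\mathcal{H}$. Using the intertwining relation, decompose $\mathcal{H}_{i}=V_{i}\mathcal{H}\oplus\mathcal{H}_{i}^{\circ}$ as $\mathcal{B}$-modules and form the amalgamated free product Hilbert space $\mathcal{K}=\mathcal{H}\oplus\bigoplus_{i_{1}\neq\cdots\neq i_{n}}\mathcal{H}_{i_{1}}^{\circ}\otimes_{\sigma}\cdots\otimes_{\sigma}\mathcal{H}_{i_{n}}^{\circ}$, on which the individual $\pi_{i}$ assemble, via the free-product action on this Fock-type space, into a single unital representation $\pi$ of the full free product that restricts to $\pi_{i}$ on each $\mathcal{A}_{i}$ and to $\sigma$ on $\mathcal{B}$; the universal property of $\ast_{\mathcal{B}}\mathcal{A}_{i}$ guarantees that $\pi$ is well defined. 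Letting $V:\mathcal{H}\rightarrow\mathcal{K}$ be the isometry onto the vacuum summand, I set $\Phi(x)=V^{\ast}\pi(x)V$; this is automatically unital, completely positive and $\mathcal{B}$-linear, and a reduced-word computation on alternating products collapsing against the vacuum recovers $\Phi|_{\mathcal{A}_{i}}=\psi_{i}$.

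The main obstacle is precisely this last gluing step: one must verify that the amalgamated free product of the Stinespring spaces genuinely carries a representation of $\ast_{\mathcal{B}}\mathcal{A}_{i}$ and that the compression $V^{\ast}\pi(\cdot)V$ restricts correctly to each $\psi_{i}$. The delicate points are matching the left and right $\mathcal{B}$-actions so that the tensor products over $\sigma(\mathcal{B})$ are well defined — this is where the faithfulness of the conditional expectations $E_{i},F_{i}$ and the induced $\mathcal{B}$-bimodule decompositions $\mathcal{A}_{i}=\mathcal{B}\oplus\mathcal{A}_{i}^{0}$, $\mathcal{B}_{i}=\mathcal{B}\oplus\mathcal{B}_{i}^{0}$ enter — and checking that alternating reduced words are sent to the correct vacuum expectations. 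Once these compatibility and positivity checks are in place, everything else is formal, and the resulting $\Phi$ is the required $\mathcal{B}$-linear completely positive common extension.
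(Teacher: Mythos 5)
A preliminary remark: the paper does not prove this lemma at all; it is quoted verbatim as Corollary 4 of Boca's paper \cite{BF}, so your proposal must be measured against Boca's argument rather than against anything in this paper. Your opening reduction is sound, and it is in fact exactly how Boca deduces his Corollary 4 from his main theorem: pass to the single target $\mathcal{C}=\ast_{\mathcal{B}}\mathcal{B}_{i}$, set $\psi_{i}=\iota_{i}\circ\varphi_{i}$, and aim for a $\mathcal{B}$-linear completely positive $\Phi:\ast_{\mathcal{B}}\mathcal{A}_{i}\rightarrow\mathcal{C}$ with $\Phi|_{\mathcal{A}_{i}}=\psi_{i}$. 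The multiplicative-domain computation giving $\pi_{i}(b)V_{i}=V_{i}\sigma(b)$ is also correct. The problem is everything after that.

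The Fock space $\mathcal{K}=\mathcal{H}\oplus\bigoplus_{i_{1}\neq\cdots\neq i_{n}}\mathcal{H}_{i_{1}}^{\circ}\otimes_{\sigma}\cdots\otimes_{\sigma}\mathcal{H}_{i_{n}}^{\circ}$ is not a well-defined object, and this is not a compatibility check that can be deferred: each defect space $\mathcal{H}_{i}^{\circ}$ is merely a Hilbert space carrying a left action of $\mathcal{B}$; it has no right $\mathcal{B}$-action and no $\mathcal{B}$-valued inner product, so the relative tensor product $\otimes_{\sigma}$ over $\sigma(\mathcal{B})$ has no meaning. Moreover, even granting some interpretation, letting $\mathcal{A}_{i}$ act on $\mathcal{K}$ requires identifying the summand (words not starting with $i$) plus ($i\cdot{}$those words) with $\mathcal{H}_{i}\otimes(\cdots)$, which forces the vacuum to be a unit for the tensor operation, $\mathcal{H}\otimes_{\sigma}\mathcal{X}\cong\mathcal{X}$; this holds in Hilbert C*-module Fock constructions, where the vacuum is the trivial module $\mathcal{B}$ itself, but fails when the vacuum is a Hilbert space. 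Indeed, already for $\mathcal{B}=\mathbb{C}$ and $\dim\mathcal{H}\geq2$ the naive pointed free product of the Stinespring spaces $\left(\mathcal{H}_{i},V_{i}\mathcal{H}\right)$ admits no such identification, since $\mathcal{H}_{w}\oplus\left(\mathcal{H}_{i}^{\circ}\otimes\mathcal{H}_{w}\right)\cong\left(\mathbb{C}\oplus\mathcal{H}_{i}^{\circ}\right)\otimes\mathcal{H}_{w}$ and $\mathbb{C}\oplus\mathcal{H}_{i}^{\circ}$ is not $\mathcal{H}_{i}=\mathcal{H}\oplus\mathcal{H}_{i}^{\circ}$. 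Two further symptoms of the same gap: the conditional expectations $E_{i}$, which define the reduced words, never actually enter your construction (your Fock space is built from dilation defect spaces, not from $\ker E_{i}$); and even if $\pi$ existed, the compression $V^{\ast}\pi(\cdot)V$ a priori lands in $\mathcal{B}\left(\mathcal{H}\right)$, not in $\mathcal{C}$ --- to get values in $\mathcal{C}$ you need the product formula $\Phi\left(a_{1}\cdots a_{n}\right)=\psi_{i_{1}}\left(a_{1}\right)\cdots\psi_{i_{n}}\left(a_{n}\right)$ on all alternating words with letters in $\ker E_{i_{k}}$, not merely the single-letter identity $\Phi|_{\mathcal{A}_{i}}=\psi_{i}$, and it is precisely this formula that the present paper uses when it applies the lemma in the proof of Theorem \ref{8}. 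Establishing complete positivity of the map defined by that product formula is the entire content of Boca's theorem; what you have labelled ``formal'' is the theorem itself, and filling it requires a genuinely different construction (Boca's, or a reworking with C*-correspondences in place of Hilbert-space dilations), not a verification within the framework you set up.
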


For showing our main result in this section, we need the following two lemmas.

\begin{lemma}
\label{4}(Proposition 2.2, \cite{ADRL})Let
\[
\widetilde{\mathcal{A}}\supseteq\mathcal{A\supseteq D\subseteq B\subseteq
}\widetilde{\mathcal{B}}%
\]
be inclusions of C*-algebras and let $\mathcal{A\ast}_{\mathcal{D}}%
\mathcal{B}$ and $\widetilde{\mathcal{A}}\mathcal{\ast}_{\mathcal{D}%
}\widetilde{\mathcal{B}}$ be the corresponding full amalgamated free product
C*-algebras. Let $\lambda:\mathcal{A\ast}_{\mathcal{D}}\mathcal{B}%
\rightarrow\widetilde{\mathcal{A}}\mathcal{\ast}_{\mathcal{D}}\widetilde
{\mathcal{B}}$ be the *-homomorphism arising via the universal property form
the inclusions $\mathcal{A\hookrightarrow}\widetilde{\mathcal{A}}$ and
$\mathcal{B\hookrightarrow}\widetilde{\mathcal{B}}$. Then $\lambda$ is injective.
\end{lemma}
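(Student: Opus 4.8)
The plan is to show that $\lambda$ is isometric, which for a $*$-homomorphism of C*-algebras is equivalent to injectivity. Since $\lambda$ is a $*$-homomorphism we automatically have $\left\Vert \lambda(x)\right\Vert \leq\left\Vert x\right\Vert$ for every $x$, so the whole content lies in the reverse inequality $\left\Vert \lambda(x)\right\Vert \geq\left\Vert x\right\Vert$ on the dense algebraic free product $\mathcal{A}\circledast_{\mathcal{D}}\mathcal{B}$. Here I would invoke the defining universal property of the \emph{full} amalgamated free product: its C*-norm is $\left\Vert x\right\Vert =\sup_{\pi}\left\Vert \pi(x)\right\Vert$, where $\pi$ ranges over all $*$-representations of the algebraic free product, and each such $\pi$ is exactly a pair $(\sigma_{A},\sigma_{B})$ of representations of $\mathcal{A}$ and $\mathcal{B}$ on a common Hilbert space $\mathcal{H}$ whose restrictions to $\mathcal{D}$ coincide, with $\pi=\sigma_{A}\ast\sigma_{B}$.

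So fix such a pair $(\sigma_{A},\sigma_{B})$ on $\mathcal{H}$ with common restriction $\sigma_{D}:=\sigma_{A}|_{\mathcal{D}}=\sigma_{B}|_{\mathcal{D}}$. The goal is to manufacture representations $\Sigma_{A}$ of $\widetilde{\mathcal{A}}$ and $\Sigma_{B}$ of $\widetilde{\mathcal{B}}$ on a single Hilbert space $\mathcal{K}\supseteq\mathcal{H}$ that still agree on $\mathcal{D}$ and for which $\mathcal{H}$ is invariant under $\Sigma_{A}(\mathcal{A})$ and $\Sigma_{B}(\mathcal{B})$ with $\Sigma_{A}|_{\mathcal{H}}=\sigma_{A}$ and $\Sigma_{B}|_{\mathcal{H}}=\sigma_{B}$. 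Granting this, the pair $(\Sigma_{A},\Sigma_{B})$ yields a representation $\widetilde{\pi}:=\Sigma_{A}\ast\Sigma_{B}$ of $\widetilde{\mathcal{A}}\ast_{\mathcal{D}}\widetilde{\mathcal{B}}$, and because $\mathcal{H}$ is invariant under the images of $\mathcal{A}$ and $\mathcal{B}$ it is invariant under $\widetilde{\pi}(\lambda(\mathcal{A}\circledast_{\mathcal{D}}\mathcal{B}))$; evaluating on an alternating word shows that the restriction to $\mathcal{H}$ is precisely $\sigma_{A}\ast\sigma_{B}$. Hence $\left\Vert \lambda(x)\right\Vert \geq\left\Vert \widetilde{\pi}(\lambda(x))\right\Vert \geq\left\Vert (\sigma_{A}\ast\sigma_{B})(x)\right\Vert$, and taking the supremum over all admissible pairs gives $\left\Vert \lambda(x)\right\Vert \geq\left\Vert x\right\Vert$.

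The one nontrivial ingredient is the extension of a representation of a subalgebra to the larger algebra. The basic building block is standard: if $\mathcal{C}\subseteq\mathcal{E}$ are unital C*-algebras and $\sigma$ is a nondegenerate representation of $\mathcal{C}$, then writing $\sigma$ as a direct sum of cyclic representations $\pi_{\omega_{i}}$ and extending each state $\omega_{i}$ to a state on $\mathcal{E}$ by Hahn--Banach positivity, the GNS representation of each extended state contains $\pi_{\omega_{i}}$ on a reducing subspace; the direct sum then extends $\sigma$ to a representation of $\mathcal{E}$ for which the original space reduces and restricts back to $\sigma$. Applying this to $\mathcal{A}\subseteq\widetilde{\mathcal{A}}$ enlarges $\mathcal{H}$, but the adjoined directions carry an action of $\mathcal{D}$ on which $\widetilde{\mathcal{B}}$ must subsequently be made to act, and symmetrically for $\mathcal{B}\subseteq\widetilde{\mathcal{B}}$.

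Reconciling these two one-sided extensions on a common space while keeping them equal on $\mathcal{D}$ is the main obstacle, and I expect to resolve it by an alternating construction: build an increasing chain $\mathcal{H}=\mathcal{K}_{0}\subseteq\mathcal{K}_{1}\subseteq\cdots$ in which, at each step, I extend the $\widetilde{\mathcal{A}}$-action (respectively the $\widetilde{\mathcal{B}}$-action) over the subspace most recently adjoined while preserving the already-defined actions on reducing subspaces and maintaining agreement on $\mathcal{D}$ throughout. Passing to the Hilbert-space inductive limit $\mathcal{K}=\overline{\bigcup_{n}\mathcal{K}_{n}}$ produces genuine representations $\Sigma_{A}$ and $\Sigma_{B}$ of $\widetilde{\mathcal{A}}$ and $\widetilde{\mathcal{B}}$ that agree on $\mathcal{D}$ and restrict to $\sigma_{A},\sigma_{B}$ on the invariant subspace $\mathcal{H}$, completing the argument. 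The delicate bookkeeping is exactly in arranging the compatibility of the $\mathcal{D}$-actions at every stage, so that the limit supports a single honest representation of each of $\widetilde{\mathcal{A}}$ and $\widetilde{\mathcal{B}}$.
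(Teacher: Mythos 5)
The paper gives no proof of this lemma at all---it is quoted verbatim from Armstrong--Dykema--Exel--Li \cite{ADRL}, and your argument is essentially the proof given there: your state-extension/GNS building block is their Lemma 2.1, and your alternating extension scheme with a Hilbert-space inductive limit, keeping the two $\mathcal{D}$-actions matched, is exactly how their Proposition 2.2 is proved. Your outline is correct, and the bookkeeping you defer does go through: a subspace invariant under a $*$-representation is reducing, so each newly adjoined orthogonal complement carries a well-defined $\mathcal{D}$-action, and applying the extension lemma (for $\mathcal{D}\subseteq\widetilde{\mathcal{A}}$ or $\mathcal{D}\subseteq\widetilde{\mathcal{B}}$) to that piece and direct-summing with the previously constructed representation preserves agreement on $\mathcal{D}$ at every stage, hence on the dense union and its closure.
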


\begin{lemma}
\label{7}(Corollary 2, \cite{LS}) Suppose that $\mathcal{A}$ is a separable
unital RFD C*-algebra and $\mathcal{D}$ is a unital finite-dimensional
C*-subalgebra of $\mathcal{A}.$ Then $\mathcal{A\ast}_{\mathcal{D}}%
\mathcal{A}$ is RFD.
\end{lemma}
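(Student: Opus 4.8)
The plan is to prove that finite-dimensional representations recover the full free-product norm on $\mathcal{A}\ast_{\mathcal{D}}\mathcal{A}$, which by the definition of the universal norm and by Lemma \ref{13.2} is equivalent to residual finite-dimensionality. Recall that a unital representation of $\mathcal{A}\ast_{\mathcal{D}}\mathcal{A}$ is exactly a \emph{compatible pair} $(\sigma_1,\sigma_2)$ of unital representations of $\mathcal{A}$ on a common Hilbert space with $\sigma_1|_{\mathcal{D}}=\sigma_2|_{\mathcal{D}}$, and that $\|x\|=\sup\|(\sigma_1\ast\sigma_2)(x)\|$ over all such pairs. Thus it suffices to show that every compatible pair can be approximated, on the finitely many letters occurring in a fixed reduced word, by a compatible pair of \emph{finite-dimensional} representations of $\mathcal{A}$; collecting such finite-dimensional compatible pairs over all reduced words and tolerances then yields a norm-recovering, hence separating, family of finite-dimensional representations of $\mathcal{A}\ast_{\mathcal{D}}\mathcal{A}$.

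First I would fix a nearly optimal compatible pair $(\sigma_1,\sigma_2)$ with common restriction $\sigma_0:=\sigma_1|_{\mathcal{D}}=\sigma_2|_{\mathcal{D}}$, and write $\mathcal{D}=\bigoplus_{j=1}^{r}M_{d_j}(\mathbb{C})$. The point of finite-dimensionality of $\mathcal{D}$ is that a finite-dimensional representation of $\mathcal{D}$ is determined up to unitary equivalence by its multiplicity vector $(m_1,\dots,m_r)\in\mathbb{Z}_{\geq0}^{r}$, so two finite-dimensional representations of $\mathcal{A}$ can be glued over $\mathcal{D}$ precisely when their restrictions to $\mathcal{D}$ carry the same multiplicity vector. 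Since $\mathcal{A}$ is separable and RFD, it has a norm-recovering family of finite-dimensional representations, and by forming finite direct sums I would produce finite-dimensional $\pi_1,\pi_2$ approximating $\sigma_1,\sigma_2$ on the chosen letters; direct summation only increases the norms of images of letters, so approximation quality is stable under the padding used below. The hypothesis $\mathcal{A}_1=\mathcal{A}_2=\mathcal{A}$ (with the same inclusion of $\mathcal{D}$) enters essentially here: the set of $\mathcal{D}$-multiplicity vectors realizable by finite-dimensional representations of the first copy coincides with that of the second, so a \emph{common} target restriction $\rho_0$ for both sides is always attainable — one simply pads $\pi_1$ and $\pi_2$ by further finite-dimensional representations of $\mathcal{A}$ until their $\mathcal{D}$-multiplicity vectors agree, which is possible because each side can realize the other's vector.

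The step I expect to be the main obstacle is upgrading norm-recovery on individual letters to a \emph{joint}, $\mathcal{D}$-relative approximation of the pair: producing an isometry $V\colon\mathbb{C}^{N}\to\mathcal{H}$ and finite-dimensional representations $\pi_1,\pi_2$ with $\pi_1|_{\mathcal{D}}=\pi_2|_{\mathcal{D}}=\rho_0$, with $V\rho_0(d)=\sigma_0(d)V$ holding \emph{exactly} for $d\in\mathcal{D}$, and with $\|V\pi_s(a)-\sigma_s(a)V\|$ small on each letter $a$. Only such an approximately intertwining, $\mathcal{D}$-exact approximation controls the alternating products $\sigma_1(a_1)\sigma_2(b_1)\sigma_1(a_2)\cdots$ constituting a reduced word, and so transfers the norm of $(\sigma_1\ast\sigma_2)(x)$ to the finite-dimensional representation determined by $(\pi_1,\pi_2)$. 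I would obtain the single-sided relative approximations from Voiculescu's characterization (Corollary \ref{55}, Proposition \ref{66}) with the $\mathcal{D}$-action held fixed, perturbing the intertwiner to be exact on the finitely many matrix units of $\mathcal{D}$; the final gluing of the two sides uses a unitary $U$ with $U\,(\pi_2|_{\mathcal{D}})\,U^{\ast}=\pi_1|_{\mathcal{D}}$, which I would absorb into the isometry $V_2\mapsto UV_2$ so that, by unitary invariance of the norms involved, no estimate is disturbed. Lemma \ref{5} can be used to assemble the $\mathcal{D}$-linear building blocks into completely positive maps on the free product, and Lemma \ref{4} to realize $\mathcal{A}\ast_{\mathcal{D}}\mathcal{A}$ inside an ambient free product of matrix products. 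Finally I would stress that this matching can fail for distinct factors — the realizable $\mathcal{D}$-multiplicity vectors on the two sides may be incompatible, which is exactly the trace obstruction of Example 1 and the reason \cite{LS} needs a nontrivial necessary-and-sufficient condition; the symmetry $\mathcal{A}_1=\mathcal{A}_2$ is precisely what removes it.
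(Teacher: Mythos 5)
The paper itself contains no proof of Lemma \ref{7}: it is quoted verbatim from Corollary 2 of \cite{LS}, where it is an immediate consequence of the main theorem of that paper (a necessary and sufficient condition for $\mathcal{A}_{1}\ast_{\mathcal{D}}\mathcal{A}_{2}$ to be RFD, phrased in terms of finite-dimensional representations of the two factors with unitarily equivalent restrictions to $\mathcal{D}$). Your overall frame --- RFD means recovering the universal norm by finite-dimensional representations, representations of $\mathcal{A}\ast_{\mathcal{D}}\mathcal{A}$ are compatible pairs, and compatibility is automatic when the two factors are the same algebra (indeed one can take $\pi_{1}\oplus\pi_{2}$ on both sides, making the $\mathcal{D}$-restrictions literally equal rather than merely equivalent) --- is exactly why the corollary falls out of the \cite{LS} main theorem, and your closing remark about the trace obstruction of Example 1 is on target.

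The gap is at the step you yourself flag as the main obstacle, and your proposed resolution does not close it. First, the tools you invoke for the single-sided approximation, Corollary \ref{55} and Proposition \ref{66}, produce unital completely positive, asymptotically multiplicative maps; these are not $\ast$-homomorphisms, cannot be glued through the universal property of the full free product (Boca's Lemma \ref{5} yields only a completely positive map, which gives no lower norm bound), and at best such data proves quasidiagonality, not the stronger conclusion RFD. What your plan actually requires is Voiculescu's approximate-equivalence (Weyl--von Neumann) theorem --- a different theorem of Voiculescu than the QD characterization --- and, crucially, a version of it \emph{relative} to $\mathcal{D}$, i.e.\ with intertwiners that commute exactly with the $\mathcal{D}$-action; you assume this in the phrase ``with the $\mathcal{D}$-action held fixed,'' but it is a genuine theorem, not a perturbation afterthought. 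Second, and more seriously: even granting $\mathcal{D}$-exact single-sided approximations, you obtain two isometries $V_{1}$ and $V_{2}$ with different ranges, $V_{s}$ approximately intertwining $\sigma_{s}$ with $\pi_{s}$. Conjugating $\pi_{2}$ by a unitary $U$ so that $\pi_{1}|_{\mathcal{D}}=\pi_{2}|_{\mathcal{D}}$ does make the pair $(\pi_{1},\pi_{2})$ compatible, but it does not produce a \emph{single} isometry approximately intertwining both sides at once. The norm estimate for a reduced word needs the chain
\begin{equation*}
\sigma_{1}\left(  a_{1}\right)  \sigma_{2}\left(  b_{1}\right)  \sigma
_{1}\left(  a_{2}\right)  \cdots V\eta\approx V\,\pi_{1}\left(  a_{1}\right)
\pi_{2}\left(  b_{1}\right)  \pi_{1}\left(  a_{2}\right)  \cdots\eta,
\end{equation*}
and with $V=V_{1}$ this breaks at the first letter from the second copy, since nothing relates $\sigma_{2}\left(  b\right)  V_{1}\zeta$ to $V_{1}\pi_{2}\left(  b\right)  \zeta$ (the contraction $V_{1}^{\ast}V_{2}$ commutes with the $\mathcal{D}$-action but is not unitary, so it cannot be absorbed either). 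This simultaneous, $\mathcal{D}$-relative block-diagonalization of a compatible pair is precisely the technical heart of the sufficiency direction of the main theorem in \cite{LS}; your proposal defers it to tools that do not supply it.
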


\begin{theorem}
\label{8}Suppose C*-algebras $\mathcal{A}_{1}$ and $\mathcal{A}_{2}$ are
unital QD algebras and $\mathcal{D}$ is a common unital finite-dimensional
C*-subalgebra. If there is a sequence $\left\{  k_{n}\right\}  $ of integers
such that $\mathcal{A}_{1}$ and $\mathcal{A}_{2}$ can be both unital embedding
into $%
{\displaystyle\prod}
\mathcal{M}_{k_{m}}\left(  \mathbb{C}\right)  /\sum\mathcal{M}_{k_{m}}\left(
\mathbb{C}\right)  $, and these two unital embeddings can be lifted to
faithful representations
\[
q_{\mathcal{A}_{1}}:\mathcal{A}_{1}\mathcal{\rightarrow}%
{\displaystyle\prod}
\mathcal{M}_{k_{m}}\left(  \mathbb{C}\right)  \text{ and }q_{\mathcal{A}_{2}%
}:\mathcal{A}_{2}\mathcal{\rightarrow}%
{\displaystyle\prod}
\mathcal{M}_{k_{m}}\left(  \mathbb{C}\right)
\]
modulo the compacts respectively satisfying $q_{\mathcal{A}_{1}}$,
$q_{\mathcal{A}_{2}}$ agree on $\mathcal{D}$, $q_{\mathcal{A}_{i}%
}|_{\mathcal{D}}$ is a unital faithful *-homomorphism and $q_{\mathcal{A}_{1}%
},q_{\mathcal{A}_{2}}$ are $\mathcal{D}$-linear in the sense that
$q_{\mathcal{A}_{i}}\left(  dx\right)  =q_{\mathcal{A}_{i}}\left(  d\right)
q_{\mathcal{A}_{i}}\left(  x\right)  $ for $d\in\mathcal{D}$ and
$x\in\mathcal{A}_{i}$ $\left(  i=1\text{ or }2\right)  $, then $\mathcal{A}%
_{1}\mathcal{\ast}_{\mathcal{D}}\mathcal{A}_{2}$ is QD.
\end{theorem}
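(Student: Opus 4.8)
The plan is to verify the criterion of Proposition \ref{66}: I must produce a sequence $\{l_m\}$ of integers and an embedding of $\mathcal{C}:=\mathcal{A}_1\ast_{\mathcal{D}}\mathcal{A}_2$ into $\prod_m\mathcal{M}_{l_m}(\mathbb{C})/\sum_m\mathcal{M}_{l_m}(\mathbb{C})$ that lifts to a faithful representation modulo the compacts. Writing $q_{\mathcal{A}_i}=(q_{\mathcal{A}_i,m})_m$ in components, each $q_{\mathcal{A}_i,m}:\mathcal{A}_i\to\mathcal{M}_{k_m}(\mathbb{C})$ is unital completely positive; the hypotheses say that the two families agree on $\mathcal{D}$, that $\sigma_m:=q_{\mathcal{A}_i,m}|_{\mathcal{D}}$ is a genuine unital $\ast$-homomorphism, that each $q_{\mathcal{A}_i,m}$ is $\mathcal{D}$-bilinear (the stated left $\mathcal{D}$-linearity together with $\ast$-preservation gives the right-handed version), and that $\|q_{\mathcal{A}_i,m}(ab)-q_{\mathcal{A}_i,m}(a)q_{\mathcal{A}_i,m}(b)\|\to 0$ as $m\to\infty$. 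Applying Lemma \ref{5} to the $\mathcal{D}$-linear completely positive maps $q_{\mathcal{A}_1,m},q_{\mathcal{A}_2,m}$ (with faithful conditional expectations onto $\mathcal{D}$ and onto $\sigma_m(\mathcal{D})$) yields a unital $\mathcal{D}$-linear completely positive common extension $\Phi_m:\mathcal{C}\to\mathcal{R}_m$, where $\mathcal{R}_m:=\mathcal{M}_{k_m}(\mathbb{C})\ast_{\sigma_m(\mathcal{D})}\mathcal{M}_{k_m}(\mathbb{C})$. The structural point is that each $\mathcal{R}_m$ is RFD, by Lemma \ref{7} applied to the finite-dimensional (hence RFD) algebra $\mathcal{M}_{k_m}(\mathbb{C})$ and its finite-dimensional subalgebra $\sigma_m(\mathcal{D})$.

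On each factor $\Phi_m|_{\mathcal{A}_i}=\iota_{i,m}\circ q_{\mathcal{A}_i,m}$, with $\iota_{i,m}$ the isometric inclusion of the $i$-th copy into $\mathcal{R}_m$; asymptotic multiplicativity of the $q_{\mathcal{A}_i,m}$ makes $\bar\Phi:=[(\Phi_m)_m]$ a genuine unital $\ast$-homomorphism $\mathcal{C}\to\prod_m\mathcal{R}_m/\sum_m\mathcal{R}_m$, isometric on each $\mathcal{A}_i$ since $\limsup_m\|q_{\mathcal{A}_i,m}(a)\|=\|a\|$. To force faithfulness on all of $\mathcal{C}$ I would factor $\bar\Phi$ through the free product of the ambient quotients. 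With $\mathcal{Q}:=\prod_m\mathcal{M}_{k_m}(\mathbb{C})/\sum_m\mathcal{M}_{k_m}(\mathbb{C})$ and the given faithful embeddings $\bar q_{\mathcal{A}_i}:\mathcal{A}_i\hookrightarrow\mathcal{Q}$ (which agree on $\mathcal{D}$), the isometric $\ast$-homomorphisms $\beta_i:=[(\iota_{i,m})_m]:\mathcal{Q}\to\prod_m\mathcal{R}_m/\sum_m\mathcal{R}_m$ agree on $\mathcal{D}$ and hence induce $\Theta:\mathcal{Q}\ast_{\mathcal{D}}\mathcal{Q}\to\prod_m\mathcal{R}_m/\sum_m\mathcal{R}_m$ with $\bar\Phi=\Theta\circ\lambda$, where $\lambda:\mathcal{C}\to\mathcal{Q}\ast_{\mathcal{D}}\mathcal{Q}$ is the canonical map. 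By Lemma \ref{4} the map $\lambda$ is injective, hence isometric, so faithfulness of $\bar\Phi$ reduces to isometry of $\Theta$ on the image of $\lambda$.

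This last point is the crux and the step I expect to be the main obstacle. Isometry of $\Theta$ says precisely that for a reduced word the full free product norm in $\mathcal{Q}\ast_{\mathcal{D}}\mathcal{Q}$ is recovered as the $\limsup_m$ of the corresponding word norms computed in the finite-dimensional full free products $\mathcal{R}_m$. Only the inequality $\leq$ is automatic; the reverse bound reflects the fact that inside each $\mathcal{R}_m$ the two copies of $\mathcal{M}_{k_m}(\mathbb{C})$ are genuinely free over $\sigma_m(\mathcal{D})$, so that no collapse of the free product norm occurs in the limit, and RFD-ness of $\mathcal{R}_m$ supplies the finite-dimensional representations realizing these word norms. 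I would establish the required uniform lower bound word-by-word, using the $\mathcal{D}$-bimodule decomposition of the free product and the faithfulness of the $\bar q_{\mathcal{A}_i}$; this is where the hypotheses on the $q_{\mathcal{A}_i}$ are genuinely exploited.

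Finally I would pass to the matrix form demanded by Proposition \ref{66}. Fixing a dense sequence $\{x_p\}$ in the separable algebra $\mathcal{C}$, RFD-ness of each $\mathcal{R}_m$ lets me select a finite-dimensional representation $\psi_m:\mathcal{R}_m\to\mathcal{M}_{l_m}(\mathbb{C})$ with $\|\psi_m(\Phi_m(x_p))\|\geq\|\Phi_m(x_p)\|_{\mathcal{R}_m}-1/m$ for all $p\leq m$. The maps $\theta_m:=\psi_m\circ\Phi_m$ are unital completely positive, asymptotically multiplicative (an asymptotically multiplicative map followed by a $\ast$-homomorphism), and asymptotically isometric along $\{x_p\}$ by the previous paragraph; since the $\Phi_m$ are honest completely positive lifts of $\bar\Phi$, the block assignment $(\theta_m)_m$ lifts an embedding of $\mathcal{C}$ into $\prod_m\mathcal{M}_{l_m}(\mathbb{C})/\sum_m\mathcal{M}_{l_m}(\mathbb{C})$ to a faithful representation modulo the compacts, and a diagonal argument over $\{x_p\}$ as in Proposition \ref{66} yields that $\mathcal{C}$ is QD. The care needed here, namely producing a genuine faithful representation modulo the compacts rather than merely the MF-type data of asymptotically multiplicative isometric maps, is exactly where the quasidiagonality (liftability) of the $\mathcal{A}_i$, transmitted through the lifts $\Phi_m$, is indispensable.
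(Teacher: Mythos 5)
Your reduction to Proposition \ref{66} is coherent, and your final paragraph (composing the lifts $\Phi_m$ with finite-dimensional representations of the RFD algebras $\mathcal{R}_m$ chosen to almost attain norms on a dense sequence) would indeed close the argument \emph{if} your ``crux'' step held. But that step --- isometry of $\Theta$ on $\lambda(\mathcal{C})$, equivalently $\limsup_m\|\Phi_m(x)\|_{\mathcal{R}_m}\geq\|x\|_{\mathcal{C}}$ for every $x$ --- is precisely the mathematical content of the theorem, and you do not prove it; you only announce that you ``would establish the required uniform lower bound word-by-word.'' Moreover, this inequality cannot be extracted from the tools you cite. The paper's own no-collapse mechanism (Lemma \ref{13.2} together with Lemma \ref{4}, yielding its inequalities (\ref{c}) and (\ref{d})) produces a lower bound for $\|\Phi(x)\|$ computed in the \emph{single} full free product $\prod_{k_m\geq k_{N_0}}\mathcal{M}_{k_m}\left(\mathbb{C}\right)\ast_{\mathcal{D}}\prod_{k_m\geq k_{N_0}}\mathcal{M}_{k_m}\left(\mathbb{C}\right)$, and the canonical $\ast$-homomorphism $\Lambda$ from that algebra into $\prod_m\mathcal{R}_m$ (induced by the coordinatewise inclusions of the two copies, which agree on the image of $\mathcal{D}$) satisfies $\Lambda\circ\Phi=(\Phi_m)_m$, whence $\sup_m\|\Phi_m(x)\|\leq\|\Phi(x)\|$. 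The comparison runs in the wrong direction: the paper's bound $\|\Phi(x)\|\geq\|x\|$ gives no lower bound whatsoever on $\limsup_m\|\Phi_m(x)\|$. What you need is an asymptotic-freeness statement --- that the two matrix copies inside $\prod_m\mathcal{R}_m/\sum_m\mathcal{R}_m$ remain ``fully free'' over $\mathcal{D}$ in the limit --- which is of comparable depth to Theorem 4 of \cite{LS2} (cf.\ Remark \ref{11}) and requires a genuine argument, not word-by-word inspection.

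It is worth seeing how the paper avoids ever needing your global claim: it argues \emph{locally}, verifying Voiculescu's criterion (Theorem 1 of \cite{V}) rather than Proposition \ref{66}. For a fixed finite set $\mathcal{F}$ and $\varepsilon>0$ it chooses a tail $N_{0}$ making the lifts nearly multiplicative on the finitely many relevant products, obtains the norm lower bound inside the one big free product via Lemma \ref{13.2} and Lemma \ref{4}, and then compresses to matrices using Lemma \ref{7} (that single algebra is RFD) together with Theorem 6 of \cite{H3}; the hard asymptotic statement never arises. Two further defects in your setup: the coordinate restrictions $\sigma_m=q_{\mathcal{A}_i,m}|_{\mathcal{D}}$ need not be faithful, since faithfulness of the induced embedding of $\mathcal{D}$ into $\prod_m\mathcal{M}_{k_m}(\mathbb{C})/\sum_m\mathcal{M}_{k_m}(\mathbb{C})$ only forces each minimal central projection of $\mathcal{D}$ to survive in infinitely many coordinates, and different projections may die along complementary subsequences, so possibly \emph{no} single $\sigma_m$ is injective. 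In that case $\mathcal{M}_{k_m}(\mathbb{C})$ contains $\sigma_m(\mathcal{D})$, a proper quotient of $\mathcal{D}$, rather than $\mathcal{D}$ itself, so Lemma \ref{5} does not apply verbatim to the pairs $(q_{\mathcal{A}_1,m},q_{\mathcal{A}_2,m})$, and the construction of $\mathcal{R}_m$, $\beta_i$ and $\Theta$ needs repair; the paper works with the tail products $\prod_{k_m\geq k_{N_0}}$ exactly because the restriction of $\widetilde{q}_{\mathcal{A}_i}$ to $\mathcal{D}$ \emph{is} faithful there. Until the crux inequality is actually established, your proposal is a plan rather than a proof.
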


\begin{proof}
Suppose $E_{i}:\mathcal{A}_{i}\rightarrow\mathcal{D}$ $\left(  i=1,2\right)  $
is a faithful conditional expectation, let $\mathcal{A}_{i}=\mathcal{D\oplus
A}_{i}^{0}$ be the $\mathcal{D}$-bimodule decomposition of $\mathcal{A}_{i}$
with respect to $E_{i}.$

Assume $\mathcal{F}$ is a finite subset of $\mathcal{A}_{1}\mathcal{\ast
}_{\mathcal{D}}\mathcal{A}_{2}$ which contains only two elements. Then
$\mathcal{F=}$ $\left\{  d_{1}+a_{i_{1}}\cdots a_{i_{s}},d_{2}+b_{k_{1}}\cdots
b_{k_{t}}\right\}  $ where $d_{1},d_{2}\in\mathcal{D}$ and $a_{i_{h}}%
\in\mathcal{A}_{i_{h}}^{0},b_{k_{l}}\in\mathcal{A}_{k_{l}}^{0}$ as well as
$i_{1}\neq\cdots\neq i_{s},$ $k_{1}\neq\cdots\neq k_{t}.$ Since $a_{i_{k}}$
and $b_{k_{t}}$ are all in $\mathcal{A}_{1}$ or $\mathcal{A}_{2}$ for each
$i_{k}\in\left\{  i_{1},\cdots,i_{s}\right\}  $ and $k_{t}\in\left\{
k_{1,}\cdots,k_{t}\right\}  ,$without loss of generality, we may assume that
either
\[
\mathcal{F=}\left\{  d_{1}+a_{1,1}a_{2,1}a_{1,2}a_{2,2}a_{1,3}\cdots
a_{1,n},d_{2}+b_{1,1}b_{2,1}b_{1,2}b_{2,2}b_{1,3}\cdots b_{1,m}\right\}
\]
or
\[
\mathcal{F=}\left\{  d_{1}+a_{1,1}a_{2,1}a_{1,2}a_{2,2}a_{1,3}\cdots
a_{1,n},d_{2}+b_{2,1}b_{1,1}b_{2,2}b_{1,2}b_{2,3}\cdots b_{2,m}\right\}
\]
where
\[
\left\{  a_{1,1},a_{1,2},\cdots,a_{1,n},b_{1,1,}b_{1,2},\cdots,b_{1,m}%
\right\}  \subseteq\mathcal{A}_{1}^{0}%
\]
and
\[
\left\{  a_{2,1},a_{2,2},\cdots,a_{2,n},b_{2,1,}b_{2,2},\cdots,b_{2,m}%
\right\}  \subseteq\mathcal{A}_{2}^{0}.
\]
For the case when
\[
\mathcal{F=}\left\{  d_{1}+a_{1,1}a_{2,1}a_{1,2}a_{2,2}a_{1,3}\cdots
a_{1,n},d_{2}+b_{1,1}b_{2,1}b_{1,2}b_{2,2}b_{1,3}\cdots b_{1,m}\right\}  ,
\]
we have $a_{1,n}\cdot b_{11}$ and $b_{1,m}\cdot a_{11}$ are both in
$\mathcal{A}_{1}.$ So we can find an integer $N_{0}$ such that
\[
\widetilde{q}_{\mathcal{A}_{1}}:\mathcal{A}_{1}\mathcal{\rightarrow}%
{\displaystyle\prod_{k_{m}\geq k_{N_{0}}}}
\mathcal{M}_{k_{m}}\left(  \mathbb{C}\right)  \text{, }\widetilde
{q}_{\mathcal{A}_{2}}:\mathcal{A}_{2}\mathcal{\rightarrow}%
{\displaystyle\prod_{k_{m}\geq k_{N_{0}}}}
\mathcal{M}_{k_{m}}\left(  \mathbb{C}\right)
\]
are faithful representations modulo the compacts respectively with%
\begin{equation}
\left\Vert \widetilde{q}_{\mathcal{A}_{i}}\left(  a_{i,j}\right)  \right\Vert
\leq\left\Vert a_{i,j}\right\Vert +1,\left\Vert \widetilde{q}_{\mathcal{A}%
_{1}}\left(  b_{k,l}\right)  \right\Vert \leq\left\Vert b_{k,l}\right\Vert
+1\text{ \ \ \ \ \ \ }\tag{2}\label{a}%
\end{equation}
where $i,k\in\left\{  1,2\right\}  ,1\leq j\leq n$ and $1\leq l\leq m.$
Meanwhile, we also require that
\begin{align}
&  \left\Vert \widetilde{q}_{\mathcal{A}_{1}}\left(  a_{1,n}b_{11}\right)
-\widetilde{q}_{\mathcal{A}_{1}}\left(  a_{1,n}\right)  \widetilde
{q}_{\mathcal{A}_{1}}\left(  b_{1,1}\right)  \right\Vert \text{ and
}\left\Vert \widetilde{q}_{\mathcal{A}_{1}}\left(  b_{1,m}a_{1,1}\right)
-\widetilde{q}_{\mathcal{A}_{1}}\left(  b_{1,m}\right)  \widetilde
{q}_{\mathcal{A}_{1}}\left(  a_{1,1}\right)  \right\Vert \text{\ }%
\tag{3}\label{b}\\
&  \leq\frac{\varepsilon}{\left(  \left\Vert a_{1,1}\right\Vert +1\right)
\left(  \left\Vert a_{2,1}\right\Vert +1\right)  \cdots\left(  \left\Vert
a_{1,n}\right\Vert +1\right)  \left(  \left\Vert b_{1,1}\right\Vert +1\right)
\left(  \left\Vert b_{2,1}\right\Vert +1\right)  \cdots\left(  \left\Vert
b_{1,m}\right\Vert +1\right)  }\text{\ .\ \ }\nonumber
\end{align}

Since $\widetilde{q}_{\mathcal{A}_{1}}|_{\mathcal{D}}=\widetilde
{q}_{\mathcal{A}_{2}}|_{\mathcal{D}}$ is a unital faithful *-homomorphism, by
Lemma \ref{5}, we have a unital completely positive map
\[
\Phi=\widetilde{q}_{\mathcal{A}_{1}}\ast\widetilde{q}_{\mathcal{A}_{2}%
}:\mathcal{A}_{1}\mathcal{\ast}_{\mathcal{D}}\mathcal{A}_{2}%
\mathcal{\rightarrow}%
{\displaystyle\prod_{k_{m}\geq k_{N_{0}}}}
\mathcal{M}_{k_{m}}\left(  \mathbb{C}\right)  \ast_{\mathcal{D}}%
{\displaystyle\prod_{k_{m}\geq k_{N_{0}}}}
\mathcal{M}_{k_{m}}\left(  \mathbb{C}\right)
\]
with $\Phi|_{\mathcal{A}_{1}}=\widetilde{q}_{\mathcal{A}_{1}},$ $\Phi
|_{\mathcal{A}_{2}}=\widetilde{q}_{\mathcal{A}_{2}}.$

Suppose
\[
\mathcal{A}_{1}=C^{\ast}\left(  z_{1},\cdots,z_{l},x_{1},\cdots,a_{1,1}%
,\cdots,a_{1,n},b_{1,1},\cdots,b_{1,m}\right)
\]
and
\[
\mathcal{A}_{2}\mathcal{=}C^{\ast}\left(  z_{1},\cdots,z_{l},y_{1}%
,\cdots,a_{2,1},\cdots,a_{2,n-1},b_{2,1},\cdots b_{2,m-1}\right)
\]
where $\mathcal{D=}C^{\ast}\left(  z_{1},\cdots,z_{l}\right)  .$ Assume that
$X_{i}\left(  N_{0}\right)  =\widetilde{q}_{\mathcal{A}_{1}}\left(
x_{i}\right)  ,Y_{i}\left(  N_{0}\right)  =\widetilde{q}_{\mathcal{A}_{2}%
}\left(  y_{i}\right)  $ for each $i\in\mathbb{N}$, $D_{j}\left(
N_{0}\right)  =\widetilde{q}_{\mathcal{A}_{1}}\left(  z_{j}\right)
=\widetilde{q}_{\mathcal{A}_{2}}\left(  z_{j}\right)  $ for $1\leq j\leq l$
and
\[
A_{1,1}\left(  N_{0}\right)  =q_{\mathcal{A}_{1}}\left(  a_{1,1}\right)
,\cdots,A_{1,n}\left(  N_{0}\right)  =q_{\mathcal{A}_{1}}\left(
a_{1,n}\right)  ,
\]%
\[
A_{2,1}\left(  N_{0}\right)  =q_{\mathcal{A}_{2}}\left(  a_{2,1}\right)
,\cdots,A_{2,n-1}\left(  N_{0}\right)  =q_{\mathcal{A}_{2}}\left(
a_{2,n-1}\right)
\]
as well as
\[
B_{1,1}\left(  N_{0}\right)  =q_{\mathcal{A}_{1}}\left(  b_{1,1}\right)
,\cdots,B_{1,m}\left(  N_{0}\right)  =q_{\mathcal{A}_{1}}\left(
b_{1,m}\right)  ,
\]%
\[
B_{2,1}\left(  N_{0}\right)  =q_{\mathcal{A}_{2}}\left(  b_{2,1}\right)
,\cdots,B_{2,m-1}\left(  N_{0}\right)  =q_{\mathcal{A}_{2}}\left(
b_{2,m-1}\right)  .
\]
Then let
\begin{align*}
&  \mathcal{A}_{1}^{N_{0}}\\
&  =C^{\ast}\left(  \left\{  D_{m}\left(  N_{0}\right)  \right\}  _{m=1}%
^{l},\left\{  X_{i}\left(  N_{0}\right)  \right\}  _{i=1}^{\mathcal{1}%
},\left\{  A_{1,1}\left(  N_{0}\right)  ,\cdots,A_{1,n}\left(  N_{0}\right)
\right\}  ,\left\{  B_{1,1}\left(  N_{0}\right)  ,\cdots,B_{1,m}\left(
N_{0}\right)  \right\}  \right)
\end{align*}
and
\begin{align*}
&  \mathcal{A}_{2}^{N_{0}}\\
&  =C^{\ast}\left(  \left\{  D_{m}\left(  N_{0}\right)  \right\}  _{m=1}%
^{l},\left\{  Y\left(  N_{0}\right)  \right\}  _{i=1}^{\mathcal{1}},\left\{
A_{2,1}\left(  N_{0}\right)  ,\cdots,A_{2,n-1}\left(  N_{0}\right)  \right\}
,\left\{  B_{2,1}\left(  N_{0}\right)  ,\cdots,B_{2,m-1}\left(  N_{0}\right)
\right\}  \right)
\end{align*}
It is clear that $\mathcal{A}_{1}^{N_{0}}$ and $\mathcal{A}_{2}^{N_{0}}$ are
unital C*-subalgebras of $%
{\displaystyle\prod_{k_{m}\geq k_{N_{0}}}}
\mathcal{M}_{k_{m}}\left(  \mathbb{C}\right)  .$ So, by Lemma \ref{13.2} and
inequality (\ref{0}), there is a *-homomorphism
\[
\pi_{1}^{N_{0}}:\mathcal{A}_{1}^{N_{0}}\rightarrow\mathcal{A}_{1}%
\ast_{\mathcal{D}}\mathcal{A}_{2}%
\]
with $\pi_{1}^{N_{0}}\left(  C^{\ast}\left(  D_{1}\left(  N_{0}\right)
,\cdots,D_{l}\left(  N_{0}\right)  \right)  \right)  =\mathcal{D}$ and a
*-homomorphism%
\[
\pi_{2}^{N_{0}}:\mathcal{A}_{2}^{N_{0}}\rightarrow\mathcal{A}_{1}%
\ast_{\mathcal{D}}\mathcal{A}_{2}%
\]
with $\pi_{1}^{N_{0}}|_{C^{\ast}\left(  D_{1}\left(  N_{0}\right)
,\cdots,D_{l}\left(  N_{0}\right)  \right)  }=\pi_{2}^{N_{0}}|_{C^{\ast
}\left(  D_{1}\left(  N_{0}\right)  ,\cdots,D_{l}\left(  N_{0}\right)
\right)  }.$ Therefore, we have a homomorphism
\[
\pi^{N_{0}}:\mathcal{A}_{1}^{N_{0}}\ast_{\mathcal{D}}\mathcal{A}_{2}^{N_{0}%
}\rightarrow\mathcal{A}_{1}\ast_{\mathcal{D}}\mathcal{A}_{2}%
\]
such that $\pi^{N_{0}}|_{\mathcal{A}_{1}^{N_{0}}}=\pi_{1}^{N_{0}}$ and
$\pi^{N_{0}}|_{\mathcal{A}_{2}^{N_{0}}}=\pi_{2}^{N_{0}}.$ By Lemma \ref{4}, we
may treat $\mathcal{A}_{1}^{N_{0}}\ast_{\mathcal{D}}\mathcal{A}_{2}^{N_{0}}$
as a C*-subalgebra of $%
{\displaystyle\prod_{k_{m}\geq k_{N_{0}}}}
\mathcal{M}_{k_{m}}\left(  \mathbb{C}\right)  \ast_{\mathcal{D}}%
{\displaystyle\prod_{k_{m}\geq k_{N_{0}}}}
\mathcal{M}_{k_{m}}\left(  \mathbb{C}\right)  .$ Therefore
\begin{align*}
\pi^{N_{0}}\left(  \Phi\left(  d_{1}+a_{1,1}a_{2,1}\cdots a_{1,n}\right)
\right)   &  =\pi^{N_{0}}\left(  q_{\mathcal{A}_{1}}\left(  d_{1}\right)
+q_{\mathcal{A}_{1}}\left(  a_{1,1}\right)  q_{\mathcal{A}_{2}}\left(
a_{2,1}\right)  \cdots q_{\mathcal{A}_{1}}\left(  a_{1,n}\right)  \right)  \\
&  =d_{1}+a_{1,1}a_{2,1}\cdots a_{1,n}.
\end{align*}
It follows that
\begin{equation}
\left\Vert \Phi\left(  d_{1}+a_{1,1}a_{2,1}\cdots a_{1,n}\right)  \right\Vert
\geq\left\Vert d_{1}+a_{1,1}a_{2,1}\cdots a_{1,n}\right\Vert ,\tag{4}\label{c}%
\end{equation}
similarly,
\begin{equation}
\left\Vert \Phi\left(  d_{2}+b_{1,1}b_{2,1}\cdots b_{1,m}\right)  \right\Vert
\geq\left\Vert d_{2}+b_{1,1}b_{2,1}\cdots b_{1,m}\right\Vert .\tag{5}\label{d}%
\end{equation}

From Lemma \ref{7}, we know that $%
{\displaystyle\prod_{k_{m}\geq k_{N_{0}}}}
\mathcal{M}_{k_{m}}\left(  \mathbb{C}\right)  \ast_{\mathcal{D}}%
{\displaystyle\prod_{k_{m}\geq k_{N_{0}}}}
\mathcal{M}_{k_{m}}\left(  \mathbb{C}\right)  $ is RFD, then it can be unital
embedded into $%
{\displaystyle\prod}
\mathcal{M}_{l_{m}}$ for a sequence $\left\{  l_{m}\right\}  $ of integers. So
by Theorem 6, \cite{H3}, there is a sequence of finite-rank projection
$\left\{  P_{n}\right\}  $ on $\oplus_{m=1}^{\mathcal{1}}\mathbb{C}^{l_{m}}$
and *-representation
\[
\varphi_{n}:%
{\displaystyle\prod_{k_{m}\geq k_{N_{0}}}}
\mathcal{M}_{k_{m}}\left(  \mathbb{C}\right)  \ast_{\mathcal{D}}%
{\displaystyle\prod_{k_{m}\geq k_{N_{0}}}}
\mathcal{M}_{k_{m}}\left(  \mathbb{C}\right)  \rightarrow\mathcal{B}\left(
P_{n}\left(  \oplus_{m=1}^{\mathcal{1}}\mathbb{C}^{l_{m}}\right)
P_{n}\right)
\]
such that
\[
\varphi_{n}\left(  a\right)  \rightarrow a\text{ }\left(  s.o.t\right)  \text{
for each }a\in%
{\displaystyle\prod_{k_{m}\geq k_{N_{0}}}}
\mathcal{M}_{k_{m}}\left(  \mathbb{C}\right)  \ast_{\mathcal{D}}%
{\displaystyle\prod_{k_{m}\geq k_{N_{0}}}}
\mathcal{M}_{k_{m}}\left(  \mathbb{C}\right)  .
\]
So, for $\Phi\left(  d_{1}+a_{1,1}a_{2,1}\cdots a_{1,n}\right)  $ and
$\Phi\left(  d_{2}+b_{1,1}b_{2,1}\cdots b_{1,m}\right)  $ in $%
{\displaystyle\prod_{k_{m}\geq k_{N_{0}}}}
\mathcal{M}_{k_{m}}\left(  \mathbb{C}\right)  \ast_{\mathcal{D}}%
{\displaystyle\prod_{k_{m}\geq k_{N_{0}}}}
\mathcal{M}_{k_{m}}\left(  \mathbb{C}\right)  ,$ there is $M,$ such that
\begin{align*}
\left\Vert \varphi_{M}\left(  \Phi\left(  d_{1}+a_{1,1}a_{2,1}\cdots
a_{1,n}\right)  \right)  \right\Vert  &  \geq\left\Vert \Phi\left(
d_{1}+a_{1,1}a_{2,1}\cdots a_{1,n}\right)  \right\Vert -\varepsilon\\
&  \geq\left\Vert d_{1}+a_{i_{1}}\cdots a_{i_{s}}\right\Vert -\varepsilon
\end{align*}
by (\ref{c}) and%
\begin{align*}
\left\Vert \varphi_{M}\left(  \Phi\left(  d_{2}+b_{1,1}b_{2,1}\cdots
b_{1,m}\right)  \right)  \right\Vert  &  \geq\left\Vert \Phi\left(
d_{2}+b_{1,1}b_{2,1}\cdots b_{1,m}\right)  \right\Vert -\varepsilon\\
&  \geq\left\Vert d_{2}+b_{1,1}b_{2,1}\cdots b_{1,m}\right\Vert -\varepsilon
\end{align*}
by (\ref{d}). Meanwhile, note that $q_{\mathcal{A}_{i}}$ is $\mathcal{D}%
$-linear and by inequalities (\ref{a}) and (\ref{b}), we have
\begin{align*}
&  \left\Vert \varphi_{M}\left(  \Phi\left(  \left(  d_{2}+a_{1,1}\cdots
a_{1,n}\right)  \left(  d_{1}+b_{1,1}\cdots b_{1,m}\right)  \right)  \right)
-\varphi_{M}\left(  \Phi\left(  d_{2}+a_{1,1}\cdots a_{1,n}\right)
\Phi\left(  d_{1}+b_{1,1}\cdots b_{1,m}\right)  \right)  \right\Vert \\
&  \leq\left\Vert \widetilde{q}_{\mathcal{A}_{1}}\left(  a_{1,1}\right)
\cdots\widetilde{q}_{\mathcal{A}_{2}}\left(  a_{2,n-1}\right)  \right\Vert
\left\Vert \widetilde{q}_{\mathcal{A}_{1}}\left(  a_{1,n}b_{1,1}\right)
-\widetilde{q}_{\mathcal{A}_{1}}\left(  a_{1,n}\right)  \widetilde
{q}_{\mathcal{A}_{1}}\left(  b_{1,1}\right)  \right\Vert \left\Vert
\widetilde{q}_{\mathcal{A}_{2}}\left(  b_{2,1}\right)  \cdots\widetilde
{q}_{\mathcal{A}_{1}}\left(  b_{1,m}\right)  \right\Vert \\
&  \leq\varepsilon
\end{align*}
and
\begin{align*}
&  \left\Vert \varphi_{M}\left(  \Phi\left(  \left(  d_{2}+b_{1,1}\cdots
b_{1,m}\right)  \left(  d_{1}+a_{1,1}\cdots a_{1,n}\right)  \right)  \right)
-\varphi_{M}\left(  \Phi\left(  d_{2}+b_{1,1}\cdots b_{1,m}\right)
\Phi\left(  d_{1}+a_{1,1}\cdots a_{1,n}\right)  \right)  \right\Vert \\
&  \leq\varepsilon.
\end{align*}
For the case%
\[
\mathcal{F=}\left\{  d_{1}+a_{1,1}a_{2,1}a_{1,2}a_{2,2}a_{1,3}\cdots
a_{1,n},d_{2}+b_{2,1}b_{1,1}b_{2,2}b_{1,2}b_{2,3}\cdots b_{2,m}\right\}  ,
\]
we can use a similar discussion and notice that%
\[
\varphi_{M}\left(  \Phi\left(  \left(  d_{1}+a_{1,1}\cdots a_{1,n}\right)
\left(  d_{2}+b_{2,1}\cdots b_{2,m}\right)  \right)  \right)  =\varphi
_{M}\left(  \Phi\left(  d_{1}+a_{1,1}\cdots a_{1,n}\right)  \Phi\left(
d_{2}+b_{2,1}\cdots b_{2,m}\right)  \right)
\]
and%
\[
\varphi_{M}\left(  \Phi\left(  \left(  d_{2}+b_{2,1}\cdots b_{2,m}\right)
\left(  d_{1}+a_{1,1}\cdots a_{1,n}\right)  \right)  \right)  =\varphi
_{M}\left(  \Phi\left(  d_{2}+b_{2,1}\cdots b_{2,m}\right)  \Phi\left(
d_{1}+a_{1,1}\cdots a_{1,n}\right)  \right)
\]
in this case. It follows that the map $\Phi_{M}=\varphi_{M}\circ\Phi$ is a
unital completely positive mapping from $\mathcal{A}_{1}\mathcal{\ast
}_{\mathcal{D}}\mathcal{A}_{2}$ to $\mathcal{B}\left(  P_{n}\left(
\oplus_{m=1}^{\mathcal{1}}\mathbb{C}^{l_{m}}\right)  P_{n}\right)  $ such
that
\[
\left\Vert \Phi_{M}\left(  a\right)  \right\Vert \geq\left\Vert a\right\Vert
-\varepsilon\text{ and }\left\Vert \Phi_{M}\left(  ab\right)  -\Phi_{M}\left(
a\right)  \Phi_{M}\left(  b\right)  \right\Vert \leq\varepsilon
\]
for $a,b\in\mathcal{F}$. Using a similar argument for any finite subset
$\mathcal{F\subseteq}$ $\mathcal{A}_{1}\mathcal{\ast}_{\mathcal{D}}%
\mathcal{A}_{2},$ we conclude that $\mathcal{A}_{1}\mathcal{\ast}%
_{\mathcal{D}}\mathcal{A}_{2}$ is QD by Theorem 1, \cite{V}.
\end{proof}

For showing the following corollary, we need a lemma.

\begin{lemma}
(Theorem III.3.4, \cite{D})\label{15}A C*-algebra $\mathcal{A}$ is AF if and
only if it is separable and :

(*) for all $\varepsilon>0$ and $A_{1},\cdots,A_{n}$ in $\mathcal{A}$, there
exists a finite dimensional C*-subalgebra $\mathcal{B}$ of $\mathcal{A}$ such
that $dist(\mathcal{A}_{i},\mathcal{B)<\varepsilon}$ for $1\leq i\leq n.$

Moreover, if $\mathcal{A}_{1}$ is a finite-dimensional subalgebra of
$\mathcal{A}$, then we may choose $\mathcal{B}$ so that it contains
$\mathcal{A}_{1}.$
\end{lemma}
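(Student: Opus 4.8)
The plan is to dispose of the easy implication and then concentrate on the converse, whose engine is a standard stability (perturbation) result for finite-dimensional C*-subalgebras. Throughout I write $\mathcal{B}\subseteq_{\delta}\mathcal{C}$ to mean that every element of the closed unit ball of $\mathcal{B}$ lies within $\delta$ of $\mathcal{C}$, and I will use as a black box the following \emph{stability lemma}, proved at the end: there is a function $f$ with $f(\delta)\to0$ as $\delta\to0$ such that whenever finite-dimensional C*-subalgebras $\mathcal{B},\mathcal{C}$ of a unital C*-algebra satisfy $\mathcal{B}\subseteq_{\delta}\mathcal{C}$, there is a unitary $u$ with $\|u-1\|\le f(\delta)$ and $u\mathcal{B}u^{\ast}\subseteq\mathcal{C}$.

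For the direction $(\Rightarrow)$, write $\mathcal{A}=\overline{\bigcup_k\mathcal{F}_k}$ with $\mathcal{F}_1\subseteq\mathcal{F}_2\subseteq\cdots$ finite-dimensional. Then $\mathcal{A}$ is separable as a countable union of separable pieces, and given $\varepsilon>0$ and $A_1,\dots,A_n$, density of $\bigcup_k\mathcal{F}_k$ lets me pick a single $k$ with $\mathrm{dist}(A_i,\mathcal{F}_k)<\varepsilon$ for all $i$, so $\mathcal{B}=\mathcal{F}_k$ verifies (*). For the ``moreover'' clause, given a finite-dimensional $\mathcal{A}_1\subseteq\mathcal{A}$ I approximate a system of matrix units of $\mathcal{A}_1$ by elements of some $\mathcal{F}_k$ (enlarging $k$ so that also $\mathrm{dist}(A_i,\mathcal{F}_k)<\varepsilon/2$), which forces $\mathcal{A}_1\subseteq_{\delta}\mathcal{F}_k$ with $\delta$ as small as I like. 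The stability lemma then gives a unitary $u\approx 1$ with $u\mathcal{A}_1u^{\ast}\subseteq\mathcal{F}_k$, i.e.\ $\mathcal{A}_1\subseteq u^{\ast}\mathcal{F}_ku=:\mathcal{B}$; since $\|u-1\|$ is small, $\mathcal{B}$ is finite-dimensional, contains $\mathcal{A}_1$, and still satisfies $\mathrm{dist}(A_i,\mathcal{B})<\varepsilon$.

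For $(\Leftarrow)$, fix a dense sequence $\{x_1,x_2,\dots\}$ in $\mathcal{A}$ and build finite-dimensional subalgebras $\mathcal{B}_1\subseteq\mathcal{B}_2\subseteq\cdots$ inductively so that $\mathrm{dist}(x_i,\mathcal{B}_n)<1/n$ for all $i\le n$. Given $\mathcal{B}_n$, apply (*) to the finite set consisting of $x_1,\dots,x_{n+1}$ together with a system of matrix units of $\mathcal{B}_n$, with tolerance $\eta$ small enough that approximating those matrix units within $\eta$ yields $\mathcal{B}_n\subseteq_{\delta}\mathcal{C}$ with $f(\delta)$ below my budget; this produces a finite-dimensional $\mathcal{C}$ with $\mathcal{B}_n\subseteq_{\delta}\mathcal{C}$ and $\mathrm{dist}(x_i,\mathcal{C})$ as small as needed. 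The stability lemma supplies $u\approx 1$ with $u\mathcal{B}_nu^{\ast}\subseteq\mathcal{C}$, so I set $\mathcal{B}_{n+1}=u^{\ast}\mathcal{C}u$: this is finite-dimensional, contains $\mathcal{B}_n$, and since $\|u-1\|$ is small it still satisfies $\mathrm{dist}(x_i,\mathcal{B}_{n+1})<1/(n+1)$ for $i\le n+1$. Then every $x_i$ lies in $\overline{\bigcup_n\mathcal{B}_n}$, so this closure is all of $\mathcal{A}$, exhibiting $\mathcal{A}$ as an increasing union of finite-dimensional subalgebras, i.e.\ AF.

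The main obstacle is the stability lemma itself, which carries all the analytic content. I would prove it by matrix units: write $\mathcal{B}=\bigoplus_k M_{n_k}$ with matrix units $\{e^{(k)}_{ij}\}$, and since $\mathcal{B}\subseteq_{\delta}\mathcal{C}$ choose $y^{(k)}_{ij}\in\mathcal{C}$ with $\|e^{(k)}_{ij}-y^{(k)}_{ij}\|$ small. These satisfy the matrix-unit relations approximately, and the task is to round them to an exact system inside $\mathcal{C}$: first replace the near-projections $y^{(k)}_{ii}$ by genuine pairwise orthogonal projections in $\mathcal{C}$ via continuous functional calculus together with a Gram--Schmidt-type orthogonalization, then replace the off-diagonal near-partial-isometries by exact partial isometries via polar decomposition inside the finite-dimensional algebra $\mathcal{C}$. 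This yields exact matrix units $\{\tilde e^{(k)}_{ij}\}\subseteq\mathcal{C}$ close to $\{e^{(k)}_{ij}\}$, hence a $\ast$-isomorphism of $\mathcal{B}$ onto a subalgebra of $\mathcal{C}$ sending each $e^{(k)}_{ij}$ to a nearby element, which a routine argument upgrades to a unitary $u$ with $\|u-1\|\le f(\delta)$ implementing the inclusion $u\mathcal{B}u^{\ast}\subseteq\mathcal{C}$. The delicate point is keeping all the functional-calculus and polar-decomposition estimates uniform in $\delta$ and independent of the ambient algebra; finite-dimensionality of $\mathcal{B}$ and $\mathcal{C}$ is exactly what makes these roundings available and the bounds uniform.
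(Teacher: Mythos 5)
Your proof is correct and follows essentially the same route as the paper's source: the paper does not prove this lemma at all but simply cites Davidson's Theorem III.3.4, whose proof is precisely your argument --- the easy direction by density of $\bigcup_k\mathcal{F}_k$, and the converse by inductively building an increasing chain of finite-dimensional subalgebras, where each step invokes the perturbation (stability) lemma for near-inclusions of finite-dimensional C*-subalgebras (Davidson's Lemmas III.3.1--III.3.2) to conjugate the previous algebra into the new approximant by a unitary close to $1$. One caveat: your black-box stability lemma asserts a modulus $f(\delta)$ uniform over all finite-dimensional $\mathcal{B}$, whereas the matrix-unit rounding you sketch yields bounds depending on $\dim\mathcal{B}$; this is harmless here, since in each application the algebra $\mathcal{B}_n$ (or $\mathcal{A}_1$) is fixed before the tolerance $\eta$ is chosen, so the dimension-dependent version of the lemma suffices for your induction.
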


\begin{corollary}
\label{9}Suppose $\mathcal{A}$ and $\mathcal{B}$ are both AF algebras,
$\mathcal{D}$ is a common unital finite-dimensional C*-subalgebra of
$\mathcal{A}$ and $\mathcal{B}$. If there are faithful tracial states
$\tau_{\mathcal{A}}$ and $\tau_{\mathcal{B}}$ on $\mathcal{A}$ and
$\mathcal{B}$ respectively, such that%
\[
\tau_{\mathcal{A}}\left(  x\right)  =\tau_{\mathcal{B}}\left(  x\right)
\text{, \ \ }\mathcal{\forall}x\in\mathcal{D}\text{,}%
\]
then $\mathcal{A\ast}_{\mathcal{D}}\mathcal{B}$ is QD.
\end{corollary}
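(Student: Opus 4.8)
The plan is to verify the hypotheses of Theorem \ref{8} with $\mathcal{A}_{1}=\mathcal{A}$ and $\mathcal{A}_{2}=\mathcal{B}$; since AF algebras are QD, it only remains to produce a common sequence $\left\{  k_{n}\right\}$ of integers together with $\mathcal{D}$-linear faithful representations modulo the compacts $q_{\mathcal{A}},q_{\mathcal{B}}$ of $\mathcal{A}$ and $\mathcal{B}$ into $\prod_{n}\mathcal{M}_{k_{n}}\left(  \mathbb{C}\right)$ whose restrictions to $\mathcal{D}$ are equal and faithful. Using Lemma \ref{15} (and its final clause), I would first choose increasing sequences of finite-dimensional C*-subalgebras $\mathcal{D}\subseteq\mathcal{A}_{1}\subseteq\mathcal{A}_{2}\subseteq\cdots$ with dense union in $\mathcal{A}$, and $\mathcal{D}\subseteq\mathcal{B}_{1}\subseteq\mathcal{B}_{2}\subseteq\cdots$ dense in $\mathcal{B}$, each containing $\mathcal{D}$. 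Let $E_{n}:\mathcal{A}\rightarrow\mathcal{A}_{n}$ and $E_{n}^{\prime}:\mathcal{B}\rightarrow\mathcal{B}_{n}$ be the $\tau_{\mathcal{A}}$- and $\tau_{\mathcal{B}}$-preserving conditional expectations; these are $\mathcal{D}$-bimodule maps, and $\left\Vert E_{n}\left(  a\right)  -a\right\Vert \rightarrow0$, $\left\Vert E_{n}^{\prime}\left(  b\right)  -b\right\Vert \rightarrow0$ for all $a\in\mathcal{A}$, $b\in\mathcal{B}$.

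The heart of the argument is, for each $n$, to build faithful $*$-representations $\sigma_{n}:\mathcal{A}_{n}\rightarrow\mathcal{M}_{k_{n}}\left(  \mathbb{C}\right)$ and $\sigma_{n}^{\prime}:\mathcal{B}_{n}\rightarrow\mathcal{M}_{k_{n}}\left(  \mathbb{C}\right)$ with the same restriction to $\mathcal{D}$ (one then sets $q_{n}=\sigma_{n}\circ E_{n}$, $q_{n}^{\prime}=\sigma_{n}^{\prime}\circ E_{n}^{\prime}$, and $q_{\mathcal{A}}=\left(  q_{n}\right)  _{n}$, $q_{\mathcal{B}}=\left(  q_{n}^{\prime}\right)  _{n}$). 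Write $\mathcal{D}=\oplus_{i=1}^{p}\mathcal{M}_{d_{i}}$, $\mathcal{A}_{n}=\oplus_{j=1}^{J}\mathcal{M}_{r_{j}}$, $\mathcal{B}_{n}=\oplus_{l=1}^{L}\mathcal{M}_{s_{l}}$, with inclusion matrices $\Lambda=\left[  \lambda_{ji}\right]$ and $M=\left[  \mu_{li}\right]$ for $\mathcal{D}\hookrightarrow\mathcal{A}_{n}$ and $\mathcal{D}\hookrightarrow\mathcal{B}_{n}$. A $*$-representation of $\mathcal{A}_{n}$ (resp. $\mathcal{B}_{n}$) is determined up to unitary equivalence by a vector of block multiplicities $a=\left(  a_{j}\right)$ (resp. $b=\left(  b_{l}\right)$); it is faithful iff $a>0$ (resp. $b>0$) entrywise, it has dimension $\sum_{j}a_{j}r_{j}$ (resp. $\sum_{l}b_{l}s_{l}$), and its restriction to $\mathcal{D}$ has multiplicity vector $\Lambda^{T}a$ (resp. $M^{T}b$). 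Thus the two restrictions to $\mathcal{D}$ coincide exactly when $\Lambda^{T}a=M^{T}b$, and in that case the two dimensions agree automatically since $\sum_{j}a_{j}r_{j}=\sum_{i}d_{i}(\Lambda^{T}a)_{i}=\sum_{i}d_{i}(M^{T}b)_{i}=\sum_{l}b_{l}s_{l}$, using $\sum_{i}\lambda_{ji}d_{i}=r_{j}$ and $\sum_{i}\mu_{li}d_{i}=s_{l}$.

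This is exactly where the trace hypothesis enters, and it is the step I expect to be the main obstacle. The faithful traces give strictly positive weight vectors $\alpha=\left(  \alpha_{j}\right)$ on $\mathcal{A}_{n}$ and $\beta=\left(  \beta_{l}\right)$ on $\mathcal{B}_{n}$, and the agreement $\tau_{\mathcal{A}}|_{\mathcal{D}}=\tau_{\mathcal{B}}|_{\mathcal{D}}$ says \emph{precisely} that $\Lambda^{T}\alpha=M^{T}\beta$ (both equal the weight vector of the common trace on $\mathcal{D}$). Hence the real solution space $\mathcal{L}_{\mathbb{R}}=\left\{  \left(  a,b\right)  :\Lambda^{T}a=M^{T}b\right\}$ is a rational subspace containing the strictly positive point $\left(  \alpha,\beta\right)$, and $\mathcal{L}=\mathcal{L}_{\mathbb{R}}\cap\left(  \mathbb{Z}^{J}\times\mathbb{Z}^{L}\right)$ is a full-rank lattice in it. Since lattice points lie within the finite covering radius of any point of $\mathcal{L}_{\mathbb{R}}$, for all sufficiently large $K$ there is $\left(  a,b\right)  \in\mathcal{L}$ within bounded distance of $K\left(  \alpha,\beta\right)$; for $K$ large this forces $a>0$ and $b>0$ entrywise. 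Choosing such an integer point yields faithful $\sigma_{n},\sigma_{n}^{\prime}$ of a common dimension $k_{n}$ with equal and faithful restriction to $\mathcal{D}$, after conjugating $\sigma_{n}^{\prime}$ by a unitary to make the two $\mathcal{D}$-representations literally identical.

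Finally I would check that $q_{\mathcal{A}}=\left(  \sigma_{n}\circ E_{n}\right)  _{n}$ and $q_{\mathcal{B}}=\left(  \sigma_{n}^{\prime}\circ E_{n}^{\prime}\right)  _{n}$ meet all requirements of Theorem \ref{8}. Asymptotic multiplicativity holds because for $a,b\in\cup_{n}\mathcal{A}_{n}$ one has $E_{n}\left(  a\right)  =a$, $E_{n}\left(  b\right)  =b$, $E_{n}\left(  ab\right)  =ab$ for large $n$, so $q_{n}$ is eventually exactly multiplicative there, and the general case follows by density together with contractivity; isometry modulo $\sum_{n}\mathcal{M}_{k_{n}}\left(  \mathbb{C}\right)$ follows from faithfulness of each $\sigma_{n}$ and $\left\Vert E_{n}\left(  a\right)  -a\right\Vert \rightarrow0$. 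The identity $q_{n}\left(  dx\right)  =\sigma_{n}\left(  E_{n}\left(  dx\right)  \right)  =\sigma_{n}\left(  dE_{n}\left(  x\right)  \right)  =\sigma_{n}\left(  d\right)  \sigma_{n}\left(  E_{n}\left(  x\right)  \right)  =q_{n}\left(  d\right)  q_{n}\left(  x\right)$ for $d\in\mathcal{D}$ gives $\mathcal{D}$-linearity, and $q_{\mathcal{A}}|_{\mathcal{D}}=\left(  \sigma_{n}|_{\mathcal{D}}\right)  _{n}=\left(  \sigma_{n}^{\prime}|_{\mathcal{D}}\right)  _{n}=q_{\mathcal{B}}|_{\mathcal{D}}$ is an honest unital faithful $*$-homomorphism into $\prod_{n}\mathcal{M}_{k_{n}}\left(  \mathbb{C}\right)$. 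Theorem \ref{8} then yields that $\mathcal{A}\ast_{\mathcal{D}}\mathcal{B}$ is QD.
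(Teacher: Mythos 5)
Your proposal is correct and follows essentially the same route as the paper: approximate $\mathcal{A}$ and $\mathcal{B}$ by finite-dimensional subalgebras containing $\mathcal{D}$ (Lemma \ref{15}), use the agreeing faithful traces to represent each pair on a common matrix algebra $\mathcal{M}_{k_n}\left(\mathbb{C}\right)$ compatibly on $\mathcal{D}$, compose with conditional expectations to obtain $\mathcal{D}$-linear faithful representations modulo the compacts into $\prod_{n}\mathcal{M}_{k_n}\left(\mathbb{C}\right)$, and invoke Theorem \ref{8}. The only real difference is that where the paper cites the rational-trace-perturbation argument from Theorem 4.2 of \cite{ADRL} to produce the compatible embeddings, you prove that step directly via the inclusion-matrix and lattice-point argument, which is a correct, self-contained substitute for the same combinatorial fact.
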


\begin{proof}
Assume that $\{x_{n}\}_{n=1}^{\infty}\subseteq\mathcal{A}$, $\{y_{n}%
\}_{n=1}^{\infty}\subseteq\mathcal{B}$ are families of generators in
$\mathcal{A}$ and $\mathcal{B}$ respectively. Note that $\mathcal{A}$ and
$\mathcal{B}$ are AF algebras, $\mathcal{D}$ is a finite-dimensional
subalgebra. For each $N\in\mathbb{N}$, there are finite dimensional C$^{\ast}%
$-subalgebras $\mathcal{A}_{N}\subseteq\mathcal{A}$ and $\mathcal{B}%
_{N}\subseteq\mathcal{B}$ such that
\begin{equation}
\max_{1\leq n\leq N}\{dist(x_{n},\mathcal{A}_{N}),dist(y_{n},\mathcal{B}%
_{N})\}\leq\frac{1}{N} \tag{6}\label{e}%
\end{equation}
and $\mathcal{A}_{N}\supset\mathcal{D}\subset\mathcal{B}_{N}$ by Lemma
\ref{15}. Note that $\tau_{\mathcal{A}}(x)=\tau_{\mathcal{B}}(x),\ \forall
\ x\in\mathcal{D}.$ From the argument in the proof of Theorem 4.2 \cite{ADRL},
there are rational faithful tracial states on $\mathcal{A}_{N}$ and
$\mathcal{B}_{N}$ such that their restrictions on $\mathcal{D}$ agree. This
implies that there is a positive integer $k_{N}$ such that
\[
\mathcal{M}_{k_{N}}(\mathbb{C)\supseteq\mathcal{A}}_{N}\mathbb{\supseteq
\mathcal{D}\subseteq\mathcal{B}}_{N}\mathbb{\subseteq\mathcal{M}}_{k_{N}%
}\mathbb{(C)}.
\]
So there are conditional expectations $E_{\mathcal{A}}^{N}%
:\mathcal{A\rightarrow A}_{N}$ and $E_{\mathcal{B}}^{N}:\mathcal{B\rightarrow
B}_{N}$such that $E_{\mathcal{A}}^{N}\left(  x\right)  =E_{\mathcal{B}}%
^{N}\left(  x\right)  $ for any $x\in\mathcal{D}$, we can define
\[
E_{\mathcal{A}}:\mathcal{A\rightarrow}%
{\textstyle\prod_{n\geq N}}
\mathcal{A}_{N}\subseteq%
{\textstyle\prod_{n\geq N}}
\mathcal{M}_{k_{N}}\left(  \mathbb{C}\right)
\]
by $E_{\mathcal{A}}\left(  a\right)  =\left(  E_{\mathcal{A}}^{N}\left(
a\right)  ,E_{\mathcal{A}}^{N+1}\left(  a\right)  ,\cdots\right)  $ and
\[
E_{\mathcal{B}}:\mathcal{B\rightarrow}%
{\textstyle\prod_{n\geq N}}
\mathcal{B}_{N}\subseteq%
{\textstyle\prod_{n\geq N}}
\mathcal{M}_{k_{N}}\left(  \mathbb{C}\right)
\]
by $E_{\mathcal{B}}\left(  a\right)  =\left(  E_{\mathcal{B}}^{N}\left(
a\right)  ,E_{\mathcal{B}}^{N+1}\left(  a\right)  ,\cdots\right)  $. It
follows that $E_{\mathcal{A}}$ and $E_{\mathcal{B}}$ are unital completely
positive maps, $E_{\mathcal{A}}|_{\mathcal{D}}=E_{\mathcal{B}}|_{\mathcal{D}}$
and $E_{\mathcal{A}}|_{\mathcal{D}}$ is a faithful unital *-homomorphism of
$\mathcal{D}$. From (\ref{e}), it is not hard to see that $E_{\mathcal{A}}$
and $E_{\mathcal{B}}$ are unital faithful representations from $\mathcal{A}$
and $\mathcal{B}$ into $%
{\textstyle\prod_{n\geq N}}
\mathcal{M}_{k_{N}}\left(  \mathbb{C}\right)  $ modulo the compacts
respectively. Then $\mathcal{A\ast}_{\mathcal{D}}\mathcal{B}$ is quasidiagonal
by Theorem \ref{8}.
\end{proof}

It is unknown whether the requirement that two AF algebras $\mathcal{A}_{i}$
have a pair of faithful traces $\tau_{i}$ on $\mathcal{A}_{i}$ which agree on
$\mathcal{D}$ is a necessary condition in proceeding theorem. But if the
embeddings from $\mathcal{D}$ into $\mathcal{A}_{i}$ $(i=1,2)$ are not unital,
then $\mathcal{A}_{1}\ast_{\mathcal{D}}\mathcal{A}_{2}$ could be QD, even
though there are no such faithful traces agree on $\mathcal{D}$. For example,
Let $\mathcal{D=}\mathbb{C}$. Suppose $\varphi_{1}:\mathbb{C}%
\mathcal{\rightarrow M}_{2}\left(  \mathbb{C}\right)  $ and $\varphi
_{2}:\mathbb{C}\mathcal{\rightarrow M}_{3}\left(  \mathbb{C}\right)  $ are
embeddings such that $\varphi_{1}\left(  1\right)  =\left(
\begin{array}
[c]{cc}%
1 & 0\\
0 & 0
\end{array}
\right)  $ and $\varphi_{2}\left(  1\right)  =\left(
\begin{array}
[c]{ccc}%
1 & 0 & 0\\
0 & 1 & 0\\
0 & 0 & 1
\end{array}
\right)  .$ Then $\mathcal{M}_{2}\left(  \mathbb{C}\right)  \underset
{\mathbb{C}}{\mathcal{\ast}}\mathcal{M}_{3}\left(  \mathbb{C}\right)
\cong\mathcal{M}_{2}\left(  \mathbb{C}\right)  \mathcal{\otimes M}_{3}\left(
\mathbb{C}\right)  $ is QD by Chapter 6 \cite{L}. When $\dim\mathcal{D}\geq2,$
we do not have a a satisfactory answer yet.

\begin{corollary}
\label{10}Suppose $\mathcal{A}$ is an AF algebra and $\mathcal{D}$ is a
finite-dimensional C*-subalgebra of $\mathcal{A}$. Then $\mathcal{A\ast
}_{\mathcal{D}}\mathcal{A}$ is quasidiagonal.
\end{corollary}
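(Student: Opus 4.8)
The plan is to realize $\mathcal{A}\ast_{\mathcal{D}}\mathcal{A}$ as an inductive limit of full amalgamated free products of \emph{finite-dimensional} algebras and then to show that quasidiagonality is inherited by such a limit. First I would use Lemma \ref{15} to write $\mathcal{A}=\overline{\bigcup_{N}\mathcal{A}_{N}}$ as the closure of an increasing union of finite-dimensional C*-subalgebras $\mathcal{A}_{1}\subseteq\mathcal{A}_{2}\subseteq\cdots$, each chosen (using the last sentence of Lemma \ref{15}) to contain the fixed finite-dimensional subalgebra $\mathcal{D}$. Applying Lemma \ref{4} to the unital inclusions $\mathcal{A}_{N}\hookrightarrow\mathcal{A}_{N+1}$ and $\mathcal{A}_{N}\hookrightarrow\mathcal{A}$ (all containing $\mathcal{D}$), the induced $\ast$-homomorphisms $\mathcal{A}_{N}\ast_{\mathcal{D}}\mathcal{A}_{N}\rightarrow\mathcal{A}_{N+1}\ast_{\mathcal{D}}\mathcal{A}_{N+1}$ and $\mathcal{A}_{N}\ast_{\mathcal{D}}\mathcal{A}_{N}\rightarrow\mathcal{A}\ast_{\mathcal{D}}\mathcal{A}$ are all injective. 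Since the algebraic amalgamated free product generated by the two copies of the dense set $\bigcup_{N}\mathcal{A}_{N}$ is dense in $\mathcal{A}\ast_{\mathcal{D}}\mathcal{A}$, and every element of it already lies in some $\mathcal{A}_{N}\ast_{\mathcal{D}}\mathcal{A}_{N}$, the union $\bigcup_{N}\bigl(\mathcal{A}_{N}\ast_{\mathcal{D}}\mathcal{A}_{N}\bigr)$ is dense, so
\[
\mathcal{A}\ast_{\mathcal{D}}\mathcal{A}=\varinjlim_{N}\ \mathcal{A}_{N}\ast_{\mathcal{D}}\mathcal{A}_{N}
\]
with injective connecting maps.

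Next I would observe that each building block is quasidiagonal. Indeed $\mathcal{A}_{N}$ is finite-dimensional, hence RFD, and $\mathcal{D}$ is a finite-dimensional subalgebra, so Lemma \ref{7} shows that $\mathcal{A}_{N}\ast_{\mathcal{D}}\mathcal{A}_{N}$ is RFD; in particular it is quasidiagonal.

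The main point, and the step I expect to be the genuine obstacle, is to show that quasidiagonality passes to this injective inductive limit. I would argue directly from Voiculescu's abstract characterization (Theorem 1, \cite{V}): given a finite set $\mathcal{F}\subseteq\mathcal{A}\ast_{\mathcal{D}}\mathcal{A}$ and $\varepsilon>0$, density lets me choose $N$ and a finite set $\mathcal{F}'\subseteq\mathcal{A}_{N}\ast_{\mathcal{D}}\mathcal{A}_{N}$ with each element of $\mathcal{F}$ within $\varepsilon$ of one in $\mathcal{F}'$. Quasidiagonality of $\mathcal{A}_{N}\ast_{\mathcal{D}}\mathcal{A}_{N}$ yields a unital completely positive map $\varphi'$ into a matrix algebra $\mathcal{M}_{k}(\mathbb{C})$ that is approximately multiplicative and approximately isometric on $\mathcal{F}'$. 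Since $\mathcal{M}_{k}(\mathbb{C})=\mathcal{B}(\mathbb{C}^{k})$ is injective, Arveson's extension theorem extends $\varphi'$ to a unital completely positive map $\varphi$ on all of $\mathcal{A}\ast_{\mathcal{D}}\mathcal{A}$ agreeing with $\varphi'$ on $\mathcal{A}_{N}\ast_{\mathcal{D}}\mathcal{A}_{N}$. After absorbing the $\varepsilon$-approximation, $\varphi$ is approximately multiplicative and approximately isometric on $\mathcal{F}$, and Theorem 1, \cite{V} then gives that $\mathcal{A}\ast_{\mathcal{D}}\mathcal{A}$ is quasidiagonal.

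Finally, I would note why the inductive-limit route is the robust one: one cannot in general simply invoke Corollary \ref{9} with $\tau_{\mathcal{A}}=\tau_{\mathcal{B}}$, because a unital AF algebra need not admit a \emph{faithful} tracial state (the unitization of the compact operators is an example), so the construction of compatible $\mathcal{D}$-linear embeddings used there is not available. When $\mathcal{A}$ does carry a faithful tracial state, Corollary \ref{9} of course applies immediately, but the argument above covers all AF algebras.
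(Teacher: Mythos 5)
Your proof is correct, and it takes a genuinely different route from the paper's. The paper's entire proof is the single line that the corollary ``is an easy consequence of Corollary \ref{9}'' (i.e., take $\mathcal{B}=\mathcal{A}$ and $\tau_{\mathcal{B}}=\tau_{\mathcal{A}}$), whereas you never invoke Theorem \ref{8} or Corollary \ref{9} at all: you realize $\mathcal{A}\ast_{\mathcal{D}}\mathcal{A}$ as the closure of the increasing union of the subalgebras $\mathcal{A}_{N}\ast_{\mathcal{D}}\mathcal{A}_{N}$ (with injectivity of the canonical maps supplied by Lemma \ref{4}), observe that each block is RFD by Lemma \ref{7} and hence QD, and then push quasidiagonality through the injective limit by combining Voiculescu's characterization (Theorem 1, \cite{V}) with Arveson's extension theorem. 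Two points about what each approach buys. First, your route does not conflict with Remark \ref{11}: the failure of quasidiagonality under inductive limits alluded to there concerns \emph{non-injective} connecting maps (amalgamating over a growing subalgebra yields quotient maps), while your connecting maps are injective, and your Arveson-extension step is precisely the standard argument that QD passes to increasing unions of C*-subalgebras. Second, and this is the real payoff, your closing observation is a legitimate criticism of the paper's own proof: Corollary \ref{9} hypothesizes a \emph{faithful} tracial state on $\mathcal{A}$, and a unital AF algebra need not admit one --- for the unitization $\mathcal{K}(\mathcal{H})+\mathbb{C}I$ of the compacts, every tracial state annihilates $\mathcal{K}(\mathcal{H})$, because the only bounded tracial positive functional on $\mathcal{K}(\mathcal{H})$ is zero --- so the paper's literal deduction covers only those AF algebras that carry faithful traces. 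The paper's framework can be repaired from inside: rerun the proof of Corollary \ref{9} with the same filtration $\mathcal{A}_{N}=\mathcal{B}_{N}\supseteq\mathcal{D}$ and the same unital embedding $\mathcal{A}_{N}\hookrightarrow\mathcal{M}_{k_{N}}(\mathbb{C})$ used for both free factors, which makes the trace hypothesis vacuous and feeds Theorem \ref{8} directly; but your argument, which proves the statement for every AF algebra exactly as written, is self-contained and the more robust of the two.
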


\begin{proof}
It is an easy consequence of Corollary \ref{9}.
\end{proof}

\begin{remark}
\label{11}Suppose that $\mathcal{A}$ is an AF algebra and $\mathcal{B=}%
\overline{\mathcal{\cup}_{i=1}^{\mathcal{1}}\mathcal{B}_{i}}\subseteq
\mathcal{A}$ is an AF subalgebra of $\mathcal{A}$. From Proposition 4.12 in
\cite{P}, we have that $\mathcal{A\ast}_{\mathcal{B}}\mathcal{A=}%
\lim\mathcal{A\ast}_{\mathcal{B}_{i}}\mathcal{A}$. But in general inductive
limits of QD C*-algebras may not be QD again. However, by Theorem 4
\cite{LS2}, we have that $\mathcal{A\ast}_{\mathcal{B}}\mathcal{A}$ is an MF algebra.
\end{remark}

\begin{center}
{\Large Acknowledgement}
\end{center}

The authors would like to thank the referee for his/her thorough review and
very useful comments.


\begin{thebibliography}{99}                                                                                               %


\bibitem {ADRL}{\small S. Armstrong, K. Dykema, R. Exel and H. Li, On
embeddings of full amalgamated free product C*-algebras, Proc. Amer. Math.
Soc. 132 (2004), 2019-2030.}

\bibitem {B}{\small F. Boca, A note on full free product C*-algebras, lifting
and quasidiagonality, Operator Theory, Operator Algebras and Related Topics
(Proc. of the 16th Op.)}

\bibitem {BD}{\small N. P. Brown and K.\ J. Dykema, Popa algebras in free
group factors, J. reine angew. Math 573 (2004), 157-180.}

\bibitem {BF}{\small F. Boca, Completely positive maps on amalgamated product
C*-algebras, Math. Scand.72(1993), 212-222.}

\bibitem {BK}{\small B. Blackadar and E. Kirchberg, Generalized inductive
limits of finite dimensional C*-algebras, Math. Ann. 307 (1997), 343-380.}

\bibitem {BN}{\small N.P. Brown, On quasidiagonal C*-algebras, Operator
algebras and applications, 19--64, Adv. Stud. Pure Math., 38, Math. Soc.
Japan, Tokyo, 2004.}

\bibitem {BO}{\small Nathanial P. Brown and Narutaka Ozawa, Algebras and
Finite-Dimensional Approximations, American Mathematical Society (March 12,
2008).}

\bibitem {D}{\small K. Davidson, C*-algebras by Example, Amer Mathematical
Society (June 1996).}

\bibitem {EL}{\small R. Exel and T. Loring, Finite-dimensional representations
of free product C -algebras,\ Internat. J. Math. 3 (1992), no. 4, 469--476.}

\bibitem {H}{\small D. Hadwin, Nonseparable approximate equivalence, Trans.
Amer. Math. Soc. 266 (1981), no. 1, 203-231.}

\bibitem {H3}{\small D. Hadwin, Free Products of generalized RFD C*-algebras,
arXiv:1108.0049v1[math. OA].}

\bibitem {HLS}{\small D. Hadwin, Q. Li and J. Shen, Topological free entropy
dimensions in Nuclear C*-algebras and in Full Free Products of Unital
C*-algebras, }

\bibitem {HT}{\small U. Haagerup, S. Thorbj}$\phi${\small rnsen, A new
application of random matrices: }$Ext(C_{red}^{\ast}(F_{2}))${\small \ is not
a group, Ann. of Math. (2) 162 (2005), no. 2, 711--775.}

\bibitem {KR}{\small R. Kadison, J. Ringrose, Fundamentals of the Operator
Algebras (Academic, Orlando, FL), (1983, 1986)Vols. 1 and 2.}

\bibitem {LS}{\small Qihui Li and Junhao Shen, On RFD property of unital full
amalgamated free products of RFD C*-algebras, Illinois J. Math, to appear.}

\bibitem {LS2}{\small Qihui Li and Junhao Shen, Full Amalgamated Free Products
of MF Algebras, Operators and Matrices,Vol.7(2013), 333-356.}

\bibitem {L}{\small T. Loring, Lifting Solutions to Perturbing Problems in C
-algebras Amer. Math. Soc., Providence, 1997.}

\bibitem {P}{\small Gert K. Pedersen, Pullback and Pushout Constructions in
C*-algebra Theory, Journal Functional Analysis 167, 243-344(1999)}

\bibitem {V}{\small D. Voiculescu, A note on quasi-diagonal C*-algebras and
homotopy, Duke Mathematical Journal Vol.62 (1991), No. 2, 267-271.}

\bibitem {Vo}{\small D. Voiculescu, Around quasidiagonal operators, Integr.
Equ. and Op. Thy.17 (1993), 137 - 149.}
\end{thebibliography}
\end{document}